\title{Any Orthonormal Basis in High Dimension is Uniformly Distributed over the Sphere}
\author{
Sheldon Goldstein\footnote{Departments of Mathematics and Physics,
     Rutgers University, Hill Center, 
     110 Frelinghuysen Road, Piscataway, NJ 08854-8019, USA.
     E-mail: oldstein@math.rutgers.edu},
Joel L. Lebowitz\footnote{Departments of Mathematics and Physics,
     Rutgers University, Hill Center, 
     110 Frelinghuysen Road, Piscataway, NJ 08854-8019, USA.
     E-mail: lebowitz@math.rutgers.edu},\\
Roderich Tumulka\footnote{Department of Mathematics,
     Rutgers University, Hill Center, 
     110 Frelinghuysen Road, Piscataway, NJ 08854-8019, USA.
     E-mail: tumulka@math.rutgers.edu}, and
Nino Zangh\`\i\footnote{Dipartimento di Fisica, Universit\`a di
    Genova and INFN sezione di Genova, Via Dodecaneso 33, 16146
    Genova, Italy.  E-mail: zanghi@ge.infn.it}
}
\date{January 22, 2015}
\newcommand{\z}[1]{#1}
\theoremstyle{plain}
\newtheorem{thm}{Theorem}%[section]
\newtheorem{lemma}{Lemma}
\newtheorem{cor}{Corollary}
\theoremstyle{definition}\newtheorem{defn}{Definition}
\numberwithin{equation}{subsection}
\DeclareMathOperator{\tr}{tr}
\DeclareMathOperator{\Var}{Var}
\DeclareMathOperator{\Cov}{Cov}
\newcommand{\CCC}{\mathbb{C}}
\newcommand{\RRR}{\mathbb{R}}
\newcommand{\NNN}{\mathbb{N}}
\newcommand{\EEE}{\mathbb{E}}
\newcommand{\PPP}{\mathbb{P}}
\newcommand{\SSS}{\mathbb{S}}
\newcommand{\sphere}{\mathbb{S}}
\newcommand{\scp}[2]{\langle #1| #2 \rangle}
\newcommand{\Hilbert}{\mathscr{H}}
\newcommand{\be}{\begin{equation}}
\newcommand{\ee}{\end{equation}}
\newcommand{\vx}{\boldsymbol{x}}
\newcommand{\vy}{\boldsymbol{y}}
\newcommand{\vz}{\boldsymbol{z}}
\newcommand{\vX}{\boldsymbol{X}}
\newcommand{\vY}{\boldsymbol{Y}}
\newcommand{\vzbar}{\overline{\vz}}
\newcommand{\zbar}{\overline{z}}
\newcommand{\X}{\mathbb{X}^d}
\newcommand{\Y}{\mathbb{Y}}
\begin{document}
\maketitle
\begin{abstract}
Let $\X$ be a real or complex Hilbert space of finite but large dimension $d$, let $\SSS(\X)$ denote the unit sphere of $\X$, and let $u$ denote the normalized uniform measure on $\SSS(\X)$. For a finite subset $B$ of $\SSS(\X)$, we may test whether it is approximately uniformly distributed over the sphere by choosing a partition $A_1,\ldots,A_m$ of $\SSS(\X)$ and checking whether the fraction of points in $B$ that lie in $A_k$ is close to $u(A_k)$ for each $k=1,\ldots,m$. We show that if $B$ is any orthonormal basis of $\X$ and $m$ is not too large, then, if we randomize the test by applying a random rotation to the sets $A_1,\ldots,A_m$, 
$B$ will pass the random test with probability close to 1. 
This statement is related to, but not entailed by, the law of large numbers. An application of this fact in quantum statistical mechanics is briefly described.

\medskip

Key words: Law of large numbers; Haar measure on the orthogonal or unitary groups; asymptotics in high dimension; irreducible representations of the orthogonal or unitary groups; random orthonormal basis.
\end{abstract}

%\newpage
%\tableofcontents

\section{Introduction}

Let $\X$ be a real or complex Hilbert space of finite but large dimension $d$, let $\SSS(\X)$ be the unit sphere in $\X$, and let $u=u_{\SSS(\X)}$ denote the uniform probability measure (i.e., normalized surface area) over $\SSS(\X)$. Given a large number of points on $\SSS(\X)$, we may ask whether these points are approximately uniformly distributed over $\SSS(\X)$. When we are given an orthonormal basis of $\X$, this provides us with $d$ points on $\SSS(\X)$, which may at first seem like too small a number, given that $\SSS(\X)$ has real dimension $d-1$ or $2d-1$, for rendering meaningful the question whether these points are approximately uniformly distributed. However, the question is meaningful in a suitably coarse-grained sense of ``approximately uniform,'' viz., in the sense that for a partition $A_1,\ldots,A_m$ of $\SSS(\X)$ with $m\ll d$,
the number of points in $A_k$, divided by $d$, is close to $u(A_k)$. 

One version of our result asserts
that for a \emph{random} orthonormal basis $\{b_1,\ldots,b_d\}$ with distribution $u_{ONB(\X)}$ (the uniform (normalized) measure over all orthonormal bases of $\X$, see below for more details), the empirical distribution on $\SSS(\X)$ of $b_1,\ldots,b_d$ is approximately uniform relative to the partition $A_1,\ldots,A_m$ with probability close to 1. Needless to say, for every fixed orthonormal basis $\{b_1,\ldots,b_d\}$ there exist partitions $A_1,\ldots, A_m$ for which the number of basis vectors in $A_k$, divided by $d$, is not at all close to $u(A_k)$; for example, $A_1=\{b_1,\ldots,b_d\}$ and $A_2=\SSS(\X)\setminus A_1$.

Our result as just formulated follows once we have it for $m=2$, i.e., for partitions consisting merely of a set $A$ and its complement. It therefore suffices to focus on the simpler statement that for any Borel set $A\subseteq\SSS(\X)$ and a $u_{ONB(\X)}$-distributed orthonormal basis,
\be\label{thm1eq1A}
\PPP\biggl(\frac{1}{d} \# \Bigl\{i\in\{1\ldots d\}: b_i\in A \Bigr\} \approx u(A) \biggr) \approx 1\,,
\ee
where $\# S$ denotes the number of elements of a finite set $S$.

Here is a different way of phrasing our result. A good concept of ``approximately uniformly distributed'' should be invariant under rotations (or unitary transformations) of $\X$; thus, if we claim of one orthonormal basis that it is approximately uniformly distributed, we should make this claim of \emph{every} orthonormal basis. So let us regard $\{b_1,\ldots,b_d\}$ as fixed and randomize $A$ instead by considering the uniform distribution over all sets $A'$ congruent to $A$. To this end, let $G$ be the orthogonal group or unitary group of $\X$, depending on whether $\X$ is real or complex, let $u_G$ be the normalized uniform measure (i.e., the Haar measure) over $G$, and let $R$ be a $u_G$-distributed random element of $G$. Our test set will be $A'=R(A)$ (i.e., a random rotation of $A$). Our result is that for every orthonormal basis $\{b_1,\ldots,b_d\}$ and every Borel set $A\subseteq\SSS(\X)$,
\be
\PPP\biggl(  \frac{1}{d} \# \Bigl\{i\in\{1\ldots d\}: b_i\in R(A) \Bigr\} \approx u(A) \biggr) \approx 1\,.
\ee

\subsection{Precise Formulation}

\begin{defn}
Let $\varepsilon,\delta>0$. We say of a finite set $B\subseteq\SSS(\X)$ that it is \emph{$\varepsilon$-$\delta$-uniform on $\SSS(\X)$} iff\footnote{This definition possesses a natural generalization to measures instead of finite sets: We say of a normalized measure $\mu$ on (the Borel $\sigma$-algebra of) $\SSS(\X)$ that it is \emph{$\varepsilon$-$\delta$-uniform on $\SSS(\X)$} iff for every Borel set $A\subseteq\SSS(\X)$, $\PPP\bigl(\bigl| \mu(R(A))-u(A) \bigr|\leq \delta \bigr)\geq 1-\varepsilon$. The definition for a finite set $B$ then corresponds to the measure $\mu =(\# B)^{-1}\sum_{b\in B} \delta_b$ with $\delta_b$ the point mass at $b$, i.e., $\mu(A')=\#(B\cap A')/\# B$.} for every Borel set $A\subseteq\SSS(\X)$,
\be\label{def1eq}
\PPP\biggl( \Bigl| \frac{\# (B\cap R(A))}{\# B} - u(A) \Bigr|\leq \delta \biggr) \geq1-\varepsilon\,.
\ee
\end{defn}

\begin{thm}\label{thm0}
(Version 1) For every $\varepsilon,\delta>0$ and every $d\geq 4$ with $d\geq \delta^{-2}\varepsilon^{-1}$,
every orthonormal basis $B$ in $\X=\RRR^d$ or $\CCC^d$ is $\varepsilon$-$\delta$-uniform on $\SSS(\X)$.
\end{thm}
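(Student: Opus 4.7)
The plan is to apply Chebyshev's inequality to the random variable
\[
Y(R) := \frac{\#(B\cap R(A))}{d} = \frac{1}{d}\sum_{i=1}^d \mathbf{1}_A(R^{-1}b_i).
\]
By Haar-invariance, each $R^{-1}b_i$ is marginally $u$-distributed on $\SSS(\X)$, so $\EEE[Y] = u(A)$. Expanding the second moment and noting that all pairs $(R^{-1}b_i, R^{-1}b_j)$ with $i \neq j$ have the same joint law (the uniform measure on the Stiefel manifold of orthonormal $2$-frames), one obtains
\[
\Var(Y) = \frac{u(A)(1-u(A))}{d} + \frac{d-1}{d}\bigl(q - u(A)^2\bigr),
\]
where $q := \PPP(R^{-1}b_1 \in A,\, R^{-1}b_2 \in A)$. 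The first term is manifestly of order $1/d$, so the task reduces to showing $|q - u(A)^2| = O(1/d)$ uniformly in the Borel set $A$.

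To do this, I would introduce the averaging operator $T$ on $L^2(\SSS(\X), u)$ defined by $(T\phi)(v) := \EEE[\phi(w)\mid v]$, where $w$ is uniformly distributed on the unit sphere of $v^\perp$. By the tower property, $q = \langle \mathbf{1}_A, T\mathbf{1}_A\rangle_{L^2}$; since $T$ is stochastic, setting $f := \mathbf{1}_A - u(A)$ gives
\[
q - u(A)^2 = \langle f, T f\rangle.
\]
The operator $T$ commutes with the action of $G$ on $\SSS(\X)$, so by Schur's lemma it acts as a scalar $c_\lambda$ on each $G$-irreducible summand in the decomposition of $L^2(\SSS(\X))$. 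In the real case these summands are the spherical harmonic spaces $\mathcal{H}_\ell$ for $\ell \geq 0$, and in the complex case they are the bidegree harmonic spaces $\mathcal{H}_{p,q}$. Hence
\[
|q - u(A)^2| \leq \Bigl(\max_{\lambda \neq 0}|c_\lambda|\Bigr)\,\|f\|_2^2 = \Bigl(\max_{\lambda \neq 0}|c_\lambda|\Bigr)\cdot u(A)(1-u(A)).
\]

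The crux of the proof is the spectral estimate $\max_{\lambda \neq 0}|c_\lambda| \leq 1/(d-1)$. In the real case the eigenvalue on $\mathcal{H}_\ell$ equals $p_\ell(0)/p_\ell(1)$, where $p_\ell$ is the degree-$\ell$ zonal function on $\SSS^{d-1}$ (a rescaled Gegenbauer polynomial of parameter $(d-2)/2$) evaluated at the cosine of the angle between $v$ and $w$. This vanishes for odd $\ell$ by parity, and for even $\ell$ a direct recursion gives $|c_{\ell+2}|/|c_\ell| = (\ell+1)/(d+\ell-1)$, which is strictly less than $1$ for $d > 2$; the maximum is therefore attained at $\ell=2$ with $|c_2| = 1/(d-1)$. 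An analogous computation on the bidegree spaces $\mathcal{H}_{p,q}$ in the complex case yields the same bound, the extremum being attained at $(p,q)=(1,1)$. I expect this spectral analysis---especially the treatment of the complex bidegree decomposition, which requires a short excursion into the representation theory of $U(d)$---to be the step requiring the most care.

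Combining the estimates gives $\Var(Y) \leq 2 u(A)(1-u(A))/d \leq 1/(2d)$, and Chebyshev's inequality then yields $\PPP(|Y - u(A)| > \delta) \leq 1/(2d\delta^2)$, which is at most $\varepsilon/2 < \varepsilon$ under the stated hypothesis $d \geq \delta^{-2}\varepsilon^{-1}$.
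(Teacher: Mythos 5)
Your proposal is correct and follows essentially the same route as the paper: reduce to a variance bound via Chebyshev, express the cross-term as $\langle f, Tf\rangle$ for the spherical averaging operator $T$, decompose $L^2(\SSS(\X))$ into the irreducible harmonic (resp.\ bidegree-harmonic) subspaces, apply Schur's lemma, and verify that the largest nontrivial eigenvalue is $1/(d-1)$ via the same recursion in $\ell$. The only cosmetic difference is that you phrase the eigenvalue computation in terms of zonal/Gegenbauer polynomials, whereas the paper computes the equator-average of a traceless harmonic polynomial directly; these are the same calculation.
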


Somewhat sloppily, we sometimes regard a basis as ordered (i.e., as a $d$-tuple) and sometimes as unordered (i.e., as a set). It does not matter which point of view we assume, and there are no bad consequences of switching the point of view; we call a $d$-tuple $\varepsilon$-$\delta$-uniform if the corresponding set (obtained by forgetting the order) is.

We use the notation $Y\sim \mu$ for saying that the random variable $Y$ has distribution $\mu$. For example, in \eqref{def1eq}, $R\sim u_G$.

Version 2 of Theorem~\ref{thm1} (see below) provides an alternative formulation in terms of a \emph{random} orthonormal basis.
The uniform distribution $u_{ONB(\X)}$ can be defined as the distribution of the random orthonormal basis $B$ obtained from a fixed orthonormal basis $B_0$ by applying a random rotation $R\sim u_G$, $B=R(B_0)$. The distribution of $B$ is, in fact, independent of the choice of $B_0$. Alternatively, a $u_{ONB(\X)}$-distributed basis $\{b_1,\ldots,b_d\}$ can be constructed as follows: Choose $b_1$ with distribution $u$ from $\SSS(\X)$; let $b_1^\perp$ denote the orthogonal complement of $b_1$ in $\X$, and $\SSS(b_1^\perp)$ the unit sphere in that subspace; choose $b_2$ uniformly in $\SSS(b_1^\perp)$; then choose $b_3$ uniformly in $\SSS(\{b_1,b_2\}^\perp)$; and so on. Theorem~\ref{thm0} can easily be seen to be equivalent to the following.

\setcounter{thm}{0}
\begin{thm}\label{thm1}
(Version 2) For every $\varepsilon,\delta>0$ and every $d\geq 4$ with $d\geq \delta^{-2}\varepsilon^{-1}$, the following is true: Let $\X=\RRR^d$ or $\CCC^d$ and $B\sim u_{ONB(\X)}$. For every Borel set $A\subseteq \sphere(\X)$,
\be\label{thm1eq}
\PPP\biggl(\Bigl|\frac{ \# (B\cap A)}{d} - u(A)\Bigr| \leq \delta \biggr) \geq 1-\varepsilon\,.
\ee
\end{thm}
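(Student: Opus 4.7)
The natural tool is a second-moment (Chebyshev) bound on $N := \#(B \cap A)$ where $B \sim u_{ONB(\X)}$. Writing $p := u(A)$, I aim to show $\EEE[N] = dp$ and $\Var(N) \leq d$; Chebyshev's inequality then gives $\PPP(|N/d - p| > \delta) \leq \Var(N)/(d^2\delta^2) \leq 1/(d\delta^2) \leq \varepsilon$ under the hypothesis $d \geq \delta^{-2}\varepsilon^{-1}$, which is exactly \eqref{thm1eq}.

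The first moment is immediate: since $u_{ONB(\X)}$ is the image of a fixed basis under $R \sim u_G$, each $b_i$ is marginally $u$-distributed, so $\EEE[\mathbf{1}_A(b_i)] = p$. By exchangeability of the components of a random basis,
\be
\Var(N) = d\,p(1-p) + d(d-1)\,c, \qquad c := \PPP(b_1 \in A,\, b_2 \in A) - p^2.
\ee
The diagonal term is already at most $d/4$, so the whole problem reduces to bounding the pair-covariance $c$; the target is $(d-1)|c| = O(1)$.

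To estimate $c$, I would introduce the conditional-expectation operator $(Tf)(x) := \EEE[f(b_2) \mid b_1 = x]$, which averages $f$ over the $(d-2)$-dimensional unit sphere in $b_1^\perp$. Because $T$ is $G$-equivariant, Schur's lemma forces it to act as a scalar $\lambda_\pi$ on each $G$-isotypic subspace of $L^2(\SSS(\X))$ (the decomposition is multiplicity-free). Writing $\psi := \mathbf{1}_A - p$ as the orthogonal sum $\sum_\pi \psi_\pi$ and using self-adjointness of $T$ (from the $(b_1,b_2) \leftrightarrow (b_2,b_1)$ symmetry), one obtains $c = \int \psi\, T\psi\, du = \sum_{\pi \neq \text{trivial}} \lambda_\pi \|\psi_\pi\|^2$. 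Since $\sum_\pi \|\psi_\pi\|^2 = \|\psi\|^2 = p(1-p)$, a uniform spectral bound $|\lambda_\pi| \leq 1/(d-1)$ on non-trivial irreducibles would yield $|c| \leq p(1-p)/(d-1)$, hence $\Var(N) \leq 2d\,p(1-p) \leq d/2$, completing the proof.

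The main obstacle is establishing this spectral bound $|\lambda_\pi| \leq 1/(d-1)$ uniformly. In the real case ($G = O(d)$) the irreducibles are the degree-$\ell$ spherical harmonics and $T$ has eigenvalue $\lambda_\ell = C_\ell^{(\alpha)}(0)/C_\ell^{(\alpha)}(1)$ with $\alpha = (d-2)/2$. Odd $\ell$ give $\lambda_\ell = 0$ by the $y \mapsto -y$ symmetry of $\SSS(x^\perp)$; for even $\ell = 2k$, an explicit product formula of the form $|\lambda_{2k}| = (2k-1)!!/\prod_{j=0}^{k-1}(2\alpha+2j+1)$ is decreasing in $k$ (the ratio of consecutive terms is $(2k+1)/(d+2k-1) < 1$), attaining its maximum $1/(d-1)$ at $k=1$, where $\lambda_2 = -1/(d-1)$. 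The complex case $G = U(d)$ is parallel, using the $(p,q)$-bigradation of $L^2(\SSS(\CCC^d))$ into irreducibles. These special-function computations are the most delicate ingredient; a direct combinatorial estimate of $\PPP(b_1,b_2 \in A)$ via the conditional density of $b_2$ given $b_1$ is an alternative route but looks harder to control cleanly.
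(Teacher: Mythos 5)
Your proposal is correct and follows essentially the same route as the paper: a second-moment/Chebyshev argument reduced to the pair covariance of $\mathbf{1}_A(b_1),\mathbf{1}_A(b_2)$, which is bounded via the spectral decomposition of the equatorial-averaging operator $T$ on spherical harmonics (Schur's lemma on the multiplicity-free decomposition, plus the explicit computation showing the largest non-trivial eigenvalue is $1/(d-1)$ in absolute value, attained at $\ell=2$ in the real case and at bi-degree $(1,1)$ in the complex case). The only differences are organizational: the paper routes the argument through its Theorems~2 and~3 for a general $L^2$ test function and computes the eigenvalues via averages of monomials over the sphere rather than via Gegenbauer values at $0$, but the constants and conclusions agree with yours.
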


\begin{cor}\label{cor1}
For every $\varepsilon,\delta>0$, every $m\in\NNN$, and every $d\geq 4$ with $d\geq m \delta^{-2}\varepsilon^{-1}$, the following is true: Let $\X=\RRR^d$ or $\CCC^d$ and $B\sim u_{ONB(\X)}$. For every partition $A_1,\ldots, A_m$ of $\sphere(\X)$ consisting of Borel sets, 
\be\label{cor1eq}
\PPP\biggl(\forall k\in\{1\ldots m\}:\Bigl|\frac{ \# (B\cap A_k)}{d} -u(A_k)\Bigr|\leq \delta \biggr) \geq 1-\varepsilon\,.
\ee
\end{cor}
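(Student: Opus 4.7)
The plan is to deduce the corollary from Theorem \ref{thm1} by a union bound, with the parameter $\varepsilon$ in Theorem \ref{thm1} rescaled to $\varepsilon/m$ so as to absorb the $m$ different ``bad events'' corresponding to the $m$ sets in the partition.

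More concretely, I would proceed as follows. Fix $\varepsilon,\delta>0$, $m\in\NNN$, and $d\geq 4$ with $d\geq m\delta^{-2}\varepsilon^{-1}$, and set $\varepsilon' = \varepsilon/m$. Then $d\geq \delta^{-2}(\varepsilon')^{-1}$, so the hypothesis of Theorem \ref{thm1} is satisfied with $\varepsilon$ replaced by $\varepsilon'$. Applying Theorem \ref{thm1} separately to each Borel set $A_k$ of the partition (for $k=1,\ldots,m$) yields
\be
\PPP\biggl(\Bigl|\frac{\#(B\cap A_k)}{d}-u(A_k)\Bigr|>\delta\biggr) \leq \varepsilon' = \frac{\varepsilon}{m}
\ee
for each $k$.

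Next I would take a union bound over $k\in\{1,\ldots,m\}$: the probability that the estimate fails for \emph{some} $k$ is at most $m\cdot(\varepsilon/m)=\varepsilon$, so the probability that it holds \emph{simultaneously} for all $k$ is at least $1-\varepsilon$, which is exactly \eqref{cor1eq}. No property of the partition (such as the $A_k$ being disjoint or covering $\sphere(\X)$) is actually used in this step; the argument goes through for any finite family of Borel sets.

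There is no real obstacle here: the corollary is a routine strengthening of Theorem \ref{thm1} by a union bound, and the factor $m$ appearing in the hypothesis $d\geq m\delta^{-2}\varepsilon^{-1}$ is exactly the price paid for reducing the allowed failure probability per set from $\varepsilon$ to $\varepsilon/m$. The only thing worth a remark is that the bound is uniform in the partition — it depends only on $m$ and not on the geometry of the $A_k$ — which is what makes the union bound applicable without any further analysis.
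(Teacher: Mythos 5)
Your argument is correct and is exactly the paper's own proof: apply Version 2 of Theorem~\ref{thm1} with $\varepsilon$ replaced by $\varepsilon/m$ to each $A_k$ and conclude by a union bound. Nothing is missing.
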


In the \emph{real} case $\X=\RRR^d$, the statements refer to orthonormal bases of both orientations (``left-handed'' and ``right-handed''). Theorem~\ref{thm1} and Corollary~\ref{cor1} remain true when restricted to just one orientation, provided $d\geq 2m\delta^{-2}\varepsilon^{-1}$.
After all, if $99\%$ of all orthonormal bases have a property $p$ and $50\%$ of all orthonormal bases are left-handed, then at least $98\%$ of all left-handed orthonormal bases must have the property $p$.

It is perhaps not surprising that the basis vectors are uniformly distributed, as their orthogonality will have the ``repulsive'' effect that no two of them can be close to each other. On the other hand, one might have expected that in order to obtain a uniformly distributed set, one has to use $\{\pm b_1,\ldots,\pm b_d\}$, while the basis vectors $\{b_1,\ldots,b_d\}$ alone tend (one might have expected) to clump on one side of the sphere, as they all lie on a cone around $b_1+\ldots +b_d$; however, when $d$ is large then the opening angle of this cone, $2\arccos(d^{-1/2})$, is approximately $\pi-2/\sqrt{d}$ and thus close to $\pi$ (or $180^\circ$), so not very clumped after all. 

The following version of Theorem 1 expresses the theorem in terms of test functions $\varphi:\SSS(\X)\to \RRR$ rather than test sets $A\subseteq\SSS(\X)$. Let $\EEE_\mu (f)$ and $\Var_\mu(f)$ denote the mean and variance of the function $f:\Omega\to \RRR$ relative to the probability measure $\mu$ on $\Omega$.

\setcounter{thm}{0}
\begin{thm}
(Version 3) For every $\varepsilon,\delta>0$, every $d\geq 4$ with $d\geq 2\delta^{-2}\varepsilon^{-1}$, every orthonormal basis $\{b_1,\ldots,b_d\}$ of $\X=\RRR^d$ or $\CCC^d$, $R\sim u_G$, and every test function $\varphi\in L^2(\SSS(\X),u,\RRR)$, 
\be\label{thm1eqvarphi}
\PPP\biggl( \Bigl| \frac{1}{d} \sum_{j=1}^d \varphi(R (b_j)) - 
\EEE_u(\varphi)\Bigr|\leq \delta \sqrt{\Var_u(\varphi)}\biggr) \geq1-\varepsilon\,.
\ee
\end{thm}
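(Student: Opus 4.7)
The plan is to apply Chebyshev's inequality to the random variable $Y:=\frac{1}{d}\sum_{j=1}^d\varphi(R(b_j))$. Since $R\sim u_G$ is Haar-distributed, each $R(b_j)$ is individually $u$-distributed, hence $\EEE[Y]=\EEE_u(\varphi)$. Replacing $\varphi$ by $\varphi-\EEE_u(\varphi)$, I assume without loss of generality that $\EEE_u(\varphi)=0$, and I write $\sigma^2:=\Var_u(\varphi)$. Expanding $Y^2$ and using that $(R(b_j),R(b_k))$ for any $j\neq k$ is distributed uniformly over orthonormal $2$-frames in $\X$ (so the off-diagonal correlation $c:=\EEE[\varphi(R(b_1))\varphi(R(b_2))]$ does not depend on the chosen pair), I obtain
\[ \Var(Y)=\EEE[Y^2]=\frac{\sigma^2}{d}+\frac{d-1}{d}\,c. \]
Thus, if I can establish $c\leq\sigma^2/(d-1)$, I will have $\Var(Y)\leq 2\sigma^2/d$, and Chebyshev immediately gives $\PPP(|Y|>\delta\sigma)\leq 2/(d\delta^2)\leq\varepsilon$ under the hypothesis $d\geq 2\delta^{-2}\varepsilon^{-1}$.

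To bound $c$, introduce the conditional-expectation operator $T$ on $L^2(\SSS(\X),u,\RRR)$ defined by $(T\varphi)(x):=\int_{\SSS(x^\perp)}\varphi(y)\,du_{\SSS(x^\perp)}(y)$; by conditioning on $R(b_1)$ one has $c=\langle\varphi,T\varphi\rangle_{L^2(u)}$. Because the assignment $x\mapsto\SSS(x^\perp)$ together with its uniform measure is $G$-equivariant, $T$ commutes with the $G$-action on $L^2$. Since $L^2(\SSS(\X),u,\RRR)$ decomposes multiplicity-freely into irreducible $G$-subspaces---the spherical harmonics $\mathcal{H}_\ell$ of degrees $\ell=0,1,2,\ldots$ in the real case, and the harmonic polynomial spaces $\mathcal{H}_{p,q}$ of bidegree $(p,q)$ in the complex case---Schur's lemma forces $T$ to act by a scalar $\lambda$ on each irreducible. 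A direct computation using the rank-one identity $\int_{\SSS(x^\perp)}y\,y^\ast\,du_{\SSS(x^\perp)}(y)=(d-1)^{-1}(I-x\,x^\ast)$ (with $y^\ast$ denoting transpose in the real case and conjugate transpose in the complex case) gives $\lambda=-1/(d-1)$ on the degree-$2$ (resp.\ bidegree-$(1,1)$) piece, while $\lambda=0$ on the degree-$1$ piece (the uniform measure on any great sphere is centered). On every remaining nonzero irreducible, $\lambda$ equals a quotient of Gegenbauer (resp.\ Jacobi) polynomial values at $0$ and $1$, and a standard estimate yields $|\lambda|\leq 1/(d-1)$ uniformly, with strict inequality beyond the degree-$2$ (resp.\ bidegree-$(1,1)$) piece. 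This gives $|c|\leq\sigma^2/(d-1)$, completing the argument.

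The main obstacle is the uniform bound $|\lambda|\leq 1/(d-1)$ over all the higher irreducibles: Schur's lemma reduces the task to one scalar per piece, and the extremal case is immediate from the rank-one identity above, but ruling out larger eigenvalues on higher-degree or higher-bidegree pieces requires a short but non-trivial calculation with classical orthogonal polynomials. All other steps---Haar-invariance, the second-moment expansion, and Chebyshev's inequality---are entirely elementary.
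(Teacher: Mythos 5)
Your proposal is essentially the paper's proof, folded into a single argument: introduce the operator $T$ from \eqref{Tdef}, observe that $\Cov\bigl(\varphi(b_1),\varphi(b_2)\bigr)=\scp{\varphi}{T\varphi}$, use $G$-equivariance together with the multiplicity-free decomposition of $L^2(\SSS(\X))$ into spherical harmonics (resp.\ harmonic bihomogeneous polynomials) and Schur's lemma to deduce that $T$ is scalar on each irreducible, and then finish with the second-moment expansion and Chebyshev exactly as in the paper's derivation of Version~3 from Theorem~\ref{thm2}. Your degree-$2$ (resp.\ bidegree-$(1,1)$) computation via the rank-one identity is correct and matches the paper's $\tau_2=-1/(d-1)$.

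The one place your argument is genuinely incomplete---as you yourself flag---is the uniform bound $|\lambda|\le 1/(d-1)$ on all irreducibles beyond the quadratic piece, which you defer to ``a standard estimate'' on Gegenbauer/Jacobi ratios. The paper closes this gap not by orthogonal-polynomial asymptotics but by computing all the eigenvalues in closed form. Lemma~\ref{lemma:avg} (and its complex analogue Lemma~\ref{lemma:avgC}) express the spherical average of a homogeneous polynomial in terms of full contractions of its coefficient tensor, with explicit constant $\alpha_{\ell,d}=(\ell-1)!!(d-2)!!/(\ell+d-2)!!$ (resp.\ $\beta_{\ell,d}=\binom{\ell+d-1}{\ell}^{-1}$). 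Applying this over the equator $\SSS(\vx^\perp)$ to a traceless (i.e.\ harmonic) coefficient tensor and comparing with the north-pole value gives $\tau_\ell=(-1)^{\ell/2}\alpha_{\ell,d-1}$ in the real case and $\tau_{\ell\ell}=(-1)^\ell\beta_{\ell,d-1}$ in the complex case, with $\tau=0$ for odd $\ell$ (resp.\ $\ell\neq\ell'$). The one-step recurrence $\alpha_{\ell+2,d}=\tfrac{\ell+1}{\ell+d}\alpha_{\ell,d}$ (and similarly $\beta_{\ell+1,d}=\tfrac{\ell+1}{\ell+d}\beta_{\ell,d}$) then makes the monotone decrease in $\ell$, and hence the bound $1/(d-1)$, immediate. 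This route is more elementary than the Gegenbauer/Jacobi estimate you would otherwise need, and it also delivers the sharpness assertion in Theorem~\ref{thm2} (equality at the quadratic eigenfunction) for free. So: same skeleton, but you should replace the appeal to ``a standard estimate'' by the explicit eigenvalue formula and its recurrence to make the proof self-contained.
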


With the same methods as in this paper, one can perhaps show also that, in high dimension $d$, the action of the rotation (resp., unitary) group $G$ on the unit sphere is weakly mixing, i.e., that for $R\sim u_G$ and any two measurable sets $A,B\subseteq \SSS(\X)$, $\PPP\bigl( u(A\cap R(B))\approx u(A) u(B) \bigr) \approx 1$.

A physical application of our result is outlined in Section~\ref{sec:appl}.

\subsection{Comparison to Known Results}

Theorem~\ref{thm1} can be regarded as a \emph{typicality theorem}, i.e., as a statement about the \emph{typical} behavior of something, here of orthonormal bases or sets congruent to a given set $A\subseteq\SSS(\X)$. Well-known examples of typicality theorems about spheres in high dimension include the following statements: (i) that in high dimension, most of the area of a sphere is near the equator, (ii) that in high dimension, most of the volume of the unit ball is near the surface.

Theorem~\ref{thm1} is similar to an instance of the \emph{law of large numbers}, i.e., of the statement that if $X_1,\ldots,X_n$ are independent identically distributed (i.i.d.)\ random variables then for sufficiently large $n$ their empirical distribution is arbitrarily close to the distribution of $X_1$ with probability arbitrarily close to 1. Suppose $b_1,\ldots,b_d$ were \emph{independent} $u$-distributed random vectors on $\SSS(\X)$, and let $K_i$ be (in every realization) 1 or 0 depending on whether $b_i\in A$ or not. 
Then the $K_i$ are i.i.d.\ random variables with distribution $\PPP(K_i=1)=u(A)$, $\PPP(K_i=0)=1-u(A)$, and the law of large numbers implies that \eqref{thm1eq} holds for sufficiently large $d$. 
Now in the situation of Version 2 of Theorem~\ref{thm1}, $b_1,\ldots,b_d$ are \emph{not} independent (since they have to be exactly orthogonal to each other), but they are approximately independent in the following sense: if we pick two independent random (uniformly distributed) vectors $x,y$ on $\SSS(\X)$ with large $d$, then they are anyhow, with high probability, approximately orthogonal. (Indeed, it follows from symmetry considerations that their inner product $\scp{x}{y}$ has expectation $\EEE\scp{x}{y}=0$ and variance $\EEE |\scp{x}{y}|^2=1/d$, so $\scp{x}{y}$ will typically be small like $1/\sqrt{d}$.) So Version 2 of Theorem~\ref{thm1} can be regarded as saying that the weak dependence between the basis vectors $b_i$ does not disturb the relation \eqref{thm1eq} provided by the law of large numbers.

Known theorems about \emph{uniformity} (or \emph{equidistribution}) are usually rather different in character from our result. One type of theorem asserts that a certain sequence $x_n$ of points (e.g., $x_n=n\alpha$ mod 1 for irrational $\alpha$) is uniformly distributed as $n\to\infty$ in some set (e.g., the unit interval) \cite{W16}; another type concerns how uniformly certain paths (e.g., billiard trajectories) fill the space they are in \cite{Beck2,Beck}. Another circle of questions closer to our result, described in \cite[Sec.~2]{Beck}, concerns quantifying how uniformly distributed a set $\{x_1,\ldots,x_n\}$ in (say) the unit interval $[0,1]$ is by comparing, for some test function $\varphi:[0,1]\to\RRR$, $n^{-1}\sum_{i=1}^n \varphi(x_i)$ to $\int_0^1 dx\,\varphi(x)$. If the $x_i$ are chosen at random (independently with uniform distribution), then the difference is (with high probability) of order $n^{-1/2}$ for any $\varphi\in L^2([0,1])$. However, if the $x_i$ are evenly spaced, $x_i=i/n$, and $\varphi$ is sufficiently smooth, then the difference is of order $n^{-1}$ or even smaller (see \cite[Sec.~2]{Beck} for a discussion). Thus, in a certain sense, some sets $\{x_1,\ldots,x_n\}$ are \emph{very} uniform. Our result can be expressed by saying that, for a random orthonormal basis $\{b_1,\ldots,b_d\}$, $d^{-1}\sum_{i=1}^d \varphi(b_i)$ is (with high probability) close to the mean of $\varphi$ for any $\varphi\in L^2(\SSS(\X))$ with typical error of order at most $d^{-1/2}$, see \eqref{thm1eqvarphi}; we leave open the question whether, for sufficiently smooth functions, the error is smaller than that.

Further facts that are somewhat related to our result
come from the field of \emph{geometric probability}. Wendel \cite{Wen62} considered $X_1,\ldots,X_n$ independent $u$-distributed on $\SSS(\RRR^d)$ and computed the probability that there exists a hemisphere containing all $n$ points. A result described in \cite[p.~326]{San76} concerns random rotations $R_1,\ldots,R_n$ in $\X=\RRR^3$ that are independent $u_G$-distributed and provides a formula, for arbitrary convex sets $A,A'\subseteq \SSS(\RRR^3)$, for the probability that $A'\cap R_1 (A)\cap \ldots \cap R_n(A) \neq \emptyset$. Further similar results (and open problems) are described in \cite{San76,Sol78}.

The phenomenon of \emph{concentration of measure} \cite{MS86,Led01}, which can occur in a space $\Y$ equipped with both a metric and a measure, refers to the situation that most points $y\in\Y$ (in terms of the measure) are close (in terms of the metric) to a certain set that is small in terms of the measure. For example, this occurs for $\Y=\SSS(\RRR^d)$ in high dimension $d$, where most points are close to the equator. As a consequence known as Levy's lemma \cite[p.~6]{MS86}, every 1-Lipschitz function $f:\SSS(\RRR^d)\to \RRR$ (i.e., with $|f(\vx)-f(\vy)|\leq \text{distance}(\vx,\vy)$) is almost constant, i.e., is close to its median (or mean, for that matter) at most points. Theorem~\ref{thm1} is somewhat similar, as it asserts (in Version 2) that the function $f_A$ on the set $ONB(\X)$ of orthonormal bases of $\X$ defined by $f_A(B)=d^{-1}\#(B\cap A)$ is almost constant for every $A$. (More generally, every function $f$ on $ONB(\X)$ of the form $f(b_1,\ldots,b_d)=\sum_{i=1}^d \varphi(b_i)$ with $\varphi\in L^2(\SSS(\X))$ is almost constant, in the sense expressed in Version 3 of Theorem~\ref{thm1}.)

\z{A fact related to Theorem 1 and concentration of measure is \emph{Raz's lemma} \cite{Raz99,MW03,KR11}, which roughly asserts the following: Let $A$ be a subset of $\SSS(\RRR^d)$, and let $1\ll k<d$. For most $k$-dimensional subspaces $U\subseteq \RRR^d$, $u_U(A \cap U) \approx u(A)$, where $u_U$ is the normalized uniform measure on $\SSS(U)$.}

\subsection{Ideas of Proof}

Our proof of Theorem~\ref{thm1} is based on Theorem~\ref{thm2} below. Let $\Var(Y)$ (and $\Cov(X,Y)$) denote the variance (covariance) of the random variable $Y$ (variables $X$ and $Y$).

\begin{thm}\label{thm2}
Let $d\geq 4$, let $\X=\RRR^d$ or $\X=\CCC^d$, let $\{b_1,\ldots,b_d\}\sim u_{ONB(\X)}$, and let 
$\varphi\in L^2(\SSS(\X),u,\RRR)$. Then
\be\label{CovdVar}
   \Bigl| \Cov\bigl(\varphi(b_1),\varphi(b_2)\bigr) \Bigr| \leq \frac{1}{d-1} \Var_u(\varphi)\,.
\ee
The estimate is sharp in the sense that for every $d$ there exists a $\varphi$ for which equality holds.
\end{thm}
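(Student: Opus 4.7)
First I would subtract the mean to assume $\EEE_u\varphi=0$; this preserves both covariance and variance. Using the standard construction of $u_{ONB(\X)}$ (in which $b_1\sim u$ and, conditionally on $b_1$, $b_2$ is $u$-uniform on $\SSS(b_1^\perp)$), the covariance becomes the $L^2$-inner product
\be
\Cov\bigl(\varphi(b_1),\varphi(b_2)\bigr)=\langle \varphi,T\varphi\rangle_{L^2(\SSS(\X),u)},\qquad (T\varphi)(x):=\int_{\SSS(x^\perp)}\varphi(y)\,du(y).
\ee
The operator $T$ is $G$-equivariant (it intertwines the natural $G$-action $\varphi\mapsto\varphi\circ R^{-1}$) and self-adjoint (by the $b_1\leftrightarrow b_2$ symmetry of the joint law of $(b_1,b_2)$). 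It therefore suffices to show that the operator norm of $T$ on the orthogonal complement of the constants is at most $1/(d-1)$.

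\textbf{Irreducible decomposition.} Next I would decompose $L^2(\SSS(\X),u,\RRR)$ into $G$-irreducible subspaces. For $\X=\RRR^d$ this is the spherical-harmonic decomposition $\bigoplus_{l\geq 0}\mathcal{H}_l$, each $\mathcal{H}_l$ being $O(d)$-irreducible for $d\geq 3$; for $\X=\CCC^d$ one uses the analogous $U(d)$-decomposition $\bigoplus_{p,q\geq 0}\mathcal{H}_{p,q}$ into harmonic polynomials of bidegree $(p,q)$ in $(z,\bar z)$. By Schur's lemma, $T$ acts as a scalar $\lambda_l$ (resp.\ $\lambda_{p,q}$) on each piece, so everything reduces to bounding these scalars.

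\textbf{Computing and bounding the eigenvalues.} To compute $\lambda_l$ in the real case I would evaluate $T$ on a zonal harmonic $Y_l(y)=C_l^{(\alpha)}(y_1)$ (Gegenbauer polynomial, $\alpha=(d-2)/2$) at $x=e_1$: since $\SSS(e_1^\perp)=\{y:y_1=0\}$, we get $\lambda_l=C_l^{(\alpha)}(0)/C_l^{(\alpha)}(1)$. Standard Gegenbauer identities then give $\lambda_l=0$ for odd $l$, and for $l=2m\geq 2$,
\be
\lambda_{2m}=(-1)^m\prod_{k=0}^{m-1}\frac{2k+1}{d+2k-1}.
\ee
Each factor is $\leq 1$ when $d\geq 2$, and the leading factor equals $1/(d-1)$; hence $|\lambda_l|\leq 1/(d-1)$ with equality precisely at $l=2$, which also delivers the sharp example (e.g.\ $\varphi(x)=x_1^2-1/d\in\mathcal{H}_2$; equivalently $\varphi(x)=\langle x,Ax\rangle-\tr(A)/d$, for which $\sum_i\varphi(b_i)=0$ deterministically, forcing equality). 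The complex case runs in parallel: the zonal element $|z_1|^2-1/d\in\mathcal{H}_{1,1}$ gives $\lambda_{1,1}=-1/(d-1)$, and all other $\lambda_{p,q}$ are strictly smaller in absolute value.

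\textbf{Main obstacle.} The substantive work is the closed-form evaluation of the Gegenbauer quotient and the uniform-in-$l$ verification that every factor is $\leq 1$; both are elementary but require the concrete Gegenbauer identities to be marshalled carefully. The remaining ingredients — equivariance and self-adjointness of $T$, irreducibility of the $\mathcal{H}_l$, and the strictly parallel $U(d)$ computation in the complex case — are standard.
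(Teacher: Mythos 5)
Your proposal is correct and its architecture coincides with the paper's: reduce the covariance to $\scp{\varphi}{T\varphi}$ for the spherical Radon-type operator $T$, decompose $L^2(\SSS(\X),u)$ into the irreducible $O(d)$- (resp.\ $U(d)$-) modules of spherical harmonics $\Hilbert_\ell$ (resp.\ $\Hilbert_{\ell\ell'}$), invoke Schur's lemma to diagonalize $T$, and bound every non-constant eigenvalue by $1/(d-1)$. The one localized difference is how the scalar on $\Hilbert_\ell$ is computed: you evaluate $T$ on the zonal harmonic and read off $\lambda_\ell=C_\ell^{(\alpha)}(0)/C_\ell^{(\alpha)}(1)$ from Gegenbauer identities, whereas the paper evaluates $TP$ at the north pole for a general $P\in\Hilbert_\ell$ with traceless symmetric coefficient tensor, using a lemma on the moments $\int_{\SSS(\RRR^{d-1})}x_{i_1}\cdots x_{i_\ell}\,du$ together with tracelessness to collapse the equatorial average to $(-1)^{\ell/2}\alpha_{\ell,d-1}C_{d\cdots d}$. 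The two computations agree: your $\lambda_{2m}=(-1)^m\prod_{k=0}^{m-1}\tfrac{2k+1}{d+2k-1}$ is exactly the paper's $(-1)^{\ell/2}(\ell-1)!!(d-3)!!/(\ell+d-3)!!$, and your monotonicity check (each factor $\leq 1$ for $d\geq 2$, leading factor $=1/(d-1)$) mirrors the paper's observation that $\alpha_{\ell+2,d}<\alpha_{\ell,d}$. The Gegenbauer route buys a self-contained closed form without needing the moment lemma; the paper's route buys reusable moment formulas (its Lemmas 2 and 4) that also drive the complex case, which you only sketch but which indeed runs in parallel with $\tau_{\ell\ell}=(-1)^\ell\binom{\ell+d-2}{\ell}^{-1}$. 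Your sharpness argument — $\sum_i\varphi(b_i)=0$ deterministically for $\varphi(\vx)=x_1^2-1/d$, forcing $\Cov=-\Var_u(\varphi)/(d-1)$ — is a pleasant, elementary alternative to the paper's appeal to the eigenfunction with eigenvalue $-1/(d-1)$ (it is of course that eigenfunction).
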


Theorem~\ref{thm2} expresses the fact that the $b_i$ are \emph{weakly} correlated. If they were independent, the covariance of $\varphi(b_1)$ and $\varphi(b_2)$ would be zero; since each $b_i$ has distribution $u$, $\Var(\varphi(b_i))=\Var_u(\varphi)$, and \eqref{CovdVar} states that the correlation coefficient of $\varphi(b_1)$ and $\varphi(b_2)$ is small (viz., no greater than $1/(d-1)$).

The proof of Theorem~\ref{thm1} (say, in Version 3) proceeds by 
noting that the random quantity $d^{-1}\sum_{i=1}^d \varphi(R(b_i))$ has expectation equal to the mean of $\varphi$ and showing that it has small variance. The variance of a sum $\sum \varphi(R(b_i))$ is the sum of the variances $\Var(\varphi(R(b_i)))$ plus the sum of the covariances $\Cov(\varphi(R(b_i)),\varphi(R(b_j)))$ for $i\neq j$; the variances can be computed, and the covariances can be estimated using Theorem~\ref{thm2}. Chebyshev's inequality then yields Theorem~\ref{thm1}.

The proof of Theorem~\ref{thm2} is, in turn, based on Theorem~\ref{thm3} below. Let $\vx^\perp$ denote the subspace orthogonal to $\vx\in\X$,
\be\label{xperpdef}
\vx^\perp=\{\vy\in \X: \scp{\vx}{\vy}=0\}\,,
\ee
$\SSS(\vx^\perp)$ the unit sphere in that subspace, and $u_{\SSS(\vx^\perp)}$ the normalized uniform measure over that sphere. In the following, we use the double factorial notation 
\be\label{n!!def}
n!!= \begin{cases} 1\cdot 3 \cdot 5 \cdots (n-2)\cdot n & \text{if $n$ is odd}\\
2\cdot 4 \cdot 6 \cdots (n-2) \cdot n & \text{if $n$ is even,} \end{cases}
\ee
and $0!!=1$.

\begin{thm}\label{thm3}
Suppose $d\geq 4$ and, again, $\X=\RRR^d$ or $\X=\CCC^d$. The equation
\be\label{Tdef}
(T \psi)(\vx) = \int\limits_{\sphere(\vx^\perp)} \!\! u_{\SSS(\vx^\perp)}(d\vy)\, \psi(\vy) \,,
\ee
defines a bounded, self-adjoint operator $T:\Hilbert\to\Hilbert$ on the ($\infty$-dimensional, complex) Hilbert space $\Hilbert=L^2(\SSS(\X),u,\CCC)$. Furthermore, $T$ has pure point spectrum, and its eigenvalues are: for $\X=\RRR^d$,
\be
0,\: 1, \text{ and }(-1)^{\ell/2} \frac{(\ell-1)!!(d-3)!!}{(\ell+d-3)!!} \text{ for }\ell=2,4,6,\ldots\:,
\ee
and for $\X=\CCC^d$,
\be
0,\: 1, \text{ and }(-1)^\ell \binom{\ell+d-2}{\ell}^{-1} \text{ for }\ell=1,2,3,\ldots\:.
\ee
For both $\X=\RRR^d$ or $\X=\CCC^d$, the largest absolute eigenvalue of $T$ is 1, with a 1-dimensional eigenspace formed by the constant functions, and the second largest absolute eigenvalue of $T$ is $1/(d-1)$.
\end{thm}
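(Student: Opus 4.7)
The plan is to exploit the $G$-equivariance of $T$, combine it with the Peter--Weyl decomposition of $\Hilbert=L^2(\SSS(\X))$, and reduce the spectral problem to Schur's lemma plus a zonal evaluation at a single point. For $R\in G$, the map $R$ sends $\SSS(\vx^\perp)$ isometrically onto $\SSS((R\vx)^\perp)$ and preserves uniform measure, so $T$ intertwines the regular representation of $G$; boundedness $\|T\|\leq 1$ follows from Jensen's inequality; and self-adjointness reduces to the fact that the joint measure $u(d\vx)\,u_{\SSS(\vx^\perp)}(d\vy)$ is the Haar measure on the Stiefel manifold $V_2(\X)$ of ordered orthonormal $2$-frames, which is symmetric under $(\vx,\vy)\leftrightarrow(\vy,\vx)$ because the swap commutes with the diagonal $G$-action.

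Next I would invoke Peter--Weyl: in the real case $L^2(S^{d-1})=\bigoplus_{\ell\geq 0}\mathcal{H}_\ell$, with $\mathcal{H}_\ell$ the space of spherical harmonics of degree $\ell$, irreducible under $O(d)$; in the complex case $L^2(S^{2d-1})=\bigoplus_{p,q\geq 0}\mathcal{H}_{p,q}$, with $\mathcal{H}_{p,q}$ the space of harmonic polynomials of bidegree $(p,q)$ in $(z_1,\ldots,z_d,\bar z_1,\ldots,\bar z_d)$, irreducible under $U(d)$. Schur's lemma then forces $T$ to act as a scalar $\lambda$ on each summand, so $T$ is diagonalizable with pure point spectrum. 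A symmetry argument accounts for all the $0$-eigenvalues at once: the perp $\vx^\perp$ depends only on the line $\RRR\vx$ or $\CCC\vx$, so $T\psi$ is automatically invariant under $\vx\to -\vx$ (real) and under $\vx\to e^{i\theta}\vx$ (complex); since $\mathcal{H}_\ell$ transforms by $(-1)^\ell$ under parity and $\mathcal{H}_{p,q}$ by $e^{i(p-q)\theta}$ under the diagonal $U(1)$, we conclude $\lambda_\ell=0$ for odd $\ell$ and $\lambda_{p,q}=0$ whenever $p\neq q$.

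For the remaining nonzero eigenvalues I would evaluate $T$ at $\vx_0=(1,0,\ldots,0)$, where $\SSS(\vx_0^\perp)$ is the unit sphere of $\{y_1=0\}$ (resp.\ $\{z_1=0\}$), on the zonal representative of each summand. In the real case the zonal harmonic of even degree $\ell$ is proportional to the Gegenbauer polynomial $C_\ell^{(d-2)/2}(y_1)$, giving
\be \lambda_\ell=\frac{C_\ell^{(d-2)/2}(0)}{C_\ell^{(d-2)/2}(1)}\,, \ee
which reduces to the stated double-factorial formula after substituting the classical closed forms for Gegenbauer polynomials at $0$ and $1$. In the complex case for $p=q=\ell$, the $U(d-1)$-zonal representative in $\mathcal{H}_{\ell,\ell}$ is proportional to the Jacobi polynomial $P_\ell^{(d-2,0)}(2|z_1|^2-1)$, and the ratio of its value at $|z_1|^2=0$ (namely $(-1)^\ell$) to its value at $|z_1|^2=1$ (namely $\binom{\ell+d-2}{\ell}$) yields $(-1)^\ell\binom{\ell+d-2}{\ell}^{-1}$.

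As a self-contained check of the second-largest eigenvalue, I would apply $T$ directly to $\psi(\vx)=x_1^2-1/d\in\mathcal{H}_2$ (real) or $|z_1|^2-1/d\in\mathcal{H}_{1,1}$ (complex): using the elementary identity $\int y_1^2\,du_{\SSS(\vx^\perp)}(\vy)=(1-x_1^2)/(d-1)$ (obtained by projecting $\boldsymbol{e}_1$ onto $\vx^\perp$ and using $\int(\vy\cdot\vw)^2\,du=\|\vw\|^2/(d-1)$ for $\vw\in \vx^\perp$), one gets $T\psi=-\psi/(d-1)$, so the eigenvalue is $-1/(d-1)$. Finally, the ratio computation $|\lambda_{\ell+2}|/|\lambda_\ell|=(\ell+1)/(\ell+d-1)<1$ (real) and the analogous $|\lambda_{\ell+1}|/|\lambda_\ell|=(\ell+1)/(\ell+d-1)<1$ (complex) shows that $|\lambda|$ is strictly decreasing in the index, so for $d\geq 4$ the largest absolute eigenvalue is $1$ on the $1$-dimensional space of constants, and the second largest is $1/(d-1)$. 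The main obstacle I anticipate is the complex case: one must invoke the $U(d)$-type harmonic decomposition of $L^2(S^{2d-1})$ and the explicit form of its zonal spherical functions on $U(d)/U(d-1)$ in terms of Jacobi polynomials, which is less commonly quoted than its real Gegenbauer counterpart; the remaining factorial bookkeeping is routine.
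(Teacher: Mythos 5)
Your proposal is correct, and its skeleton — $G$-equivariance of $T$, the decomposition $\Hilbert=\bigoplus_\ell\Hilbert_\ell$ (resp.\ $\bigoplus_{p,q}\Hilbert_{pq}$) into pairwise inequivalent irreducibles, Schur's lemma, and evaluation of the scalar by comparing $T\psi$ at the north pole with $\psi$ there — is exactly the paper's. The differences are in how the scalar is extracted. For the zero eigenvalues, your observation that $\vx^\perp$ depends only on the line $\RRR\vx$ (resp.\ $\CCC\vx$), so that $T\psi$ is automatically parity-even (resp.\ invariant under $\vz\mapsto e^{i\theta}\vz$) and hence kills the odd harmonics (resp.\ all $\Hilbert_{pq}$ with $p\neq q$), is a cleaner argument than the paper's, which obtains the same vanishing from an explicit moment computation. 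For the nonzero eigenvalues, the paper integrates a \emph{general} traceless harmonic polynomial over the equator using a self-contained lemma on sphere averages of monomials (proved via invariant-tensor theory or Gaussianization) and then contracts the traceless tensor repeatedly to get $(-1)^{\ell/2}\alpha_{\ell,d-1}$; you instead pick the \emph{zonal} representative, for which no integration is needed at all (the Gegenbauer/Jacobi function is constant on $\SSS(\vx_0^\perp)$), and read off the eigenvalue from the classical special values $C_{2k}^{(d-2)/2}(0)$, $C_\ell^{(d-2)/2}(1)$, $P_\ell^{(d-2,0)}(\mp1)$. Your route is shorter but imports the explicit zonal spherical functions of $O(d)/O(d-1)$ and $U(d)/U(d-1)$ as known facts (the latter, as you note, being the less standard citation), whereas the paper trades that for two elementary averaging lemmas it proves from scratch; the final monotonicity step identifying $1/(d-1)$ as the second largest absolute eigenvalue is identical in both.
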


\z{The operator $T$ is related to the Radon transformation, the differences being that one integrates only over the unit sphere, and that the only hyperplanes considered are those passing through the origin. In \cite{KR11}, this operator is called the \emph{spherical Radon transformation}. 

A result very similar to Theorem~\ref{thm2} is Theorem 5.2 in \cite{KR11}, which, however, neither implies nor is implied by our Theorem~\ref{thm2}. To facilitate the comparison, we can express \eqref{CovdVar} in terms of the operator $T$ introduced in \eqref{Tdef} as
\be
\Bigl|\scp{\varphi}{T\varphi}-({\textstyle\int} \varphi)^2\Bigr|
\leq \frac{1}{d-1} \, \Bigl\|\varphi-{\textstyle\int}\varphi\Bigr\|_{L^2}^2
\ee
(with $\int\varphi = \int_{\SSS(\X)} u(d\vx) \, \varphi(\vx)$), and in fact it follows from Theorem~\ref{thm3} that
\be\label{chiTphi}
\Bigl|\scp{\chi}{T\varphi}-({\textstyle\int}\chi)({\textstyle\int}\varphi)\Bigr|
\leq \frac{1}{d-1} \, \Bigl\|\chi-{\textstyle\int}\chi\Bigr\|_{L^2} \,\Bigl\|\varphi-{\textstyle\int}\varphi\Bigr\|_{L^2}\,.
\ee
Theorem 5.2 in \cite{KR11} provides a bound for the left-hand side of \eqref{chiTphi} in terms of the $L^\infty$ norms of $\chi$ and $\varphi$ in case these norms are not too large.}

In Section~\ref{sec:proofs}, we provide proofs of Theorems~\ref{thm0}--\ref{thm3} and Corollary~\ref{cor1}; our proofs make repeated use of the rotational/unitary symmetry of the problem. In Section~\ref{sec:appl}, we briefly outline a physical application discussed in detail in \cite{GLMTZ14}.

\section{Proofs}
\label{sec:proofs}

\subsection{Proof of Theorems~\ref{thm0}--\ref{thm2} and Corollary~\ref{cor1} From Theorem~\ref{thm3}}

\begin{proof}[Proof of Version 1 of Theorem~\ref{thm0} from Version 2]
Suppose Version 2 is true. Fix $\varepsilon,\delta>0$, let $d\geq 4$ and $d\geq \delta^{-2}\varepsilon^{-1}$, and let $B_0$ be any fixed orthonormal basis in $\X=\RRR^d$ or $\CCC^d$. Then a random orthonormal basis $B$ with distribution $u_{ONB(\X)}$ can be thought of as obtained from $B_0$ by applying a random rotation, $B=R^{-1}B_0$ with $R\sim u_G$ (which implies $R^{-1}\sim u_G$). Then
\be
\#(B_0\cap R(A))=\#\bigl(R^{-1}(B_0)\cap A\bigr) = \#(B\cap A)\,,
\ee
so \eqref{def1eq} is equivalent to \eqref{thm1eq}.
\end{proof}

\begin{proof}[Proof of Corollary~\ref{cor1} from Version 2 of Theorem~\ref{thm1}]
Let $d\geq 4$, and let $E_{k,d}$ denote the event that
\be\label{Ekdef}
\Bigl| \frac{\# (B\cap A_k)}{d} - u(A_k) \Bigr| \leq \delta\,.
\ee
Version 2 of Theorem~\ref{thm1}, with $\varepsilon$ replaced by $\varepsilon/m$, yields that  for any $d\geq m\delta^{-2}\varepsilon^{-1}$, $\PPP(E_{k,d})\geq1-\varepsilon/m$, and thus $\PPP(E_{1,d}\cap \ldots \cap E_{m,d})\geq  1-\varepsilon$. 
\end{proof}

\begin{proof}[Proof of Version 2 of Theorem~\ref{thm0} from Version 3]
Let $\varphi$ be the indicator function of $A$. Then $\varphi$ lies in $L^2(\SSS(\X),u,\RRR)$, has mean $u(A)$ and variance
\be
\Var_u(\varphi) = u(A)(1-u(A))\leq \frac{1}{4}\,.
\ee
If we think of $B\sim u_{ONB(\X)}$ again as obtained by applying a random rotation $R\sim u_G$ to a fixed orthonormal basis $B_0=\{b_1,\ldots,b_d\}$, then
\be
\#(B\cap A) = \#(R(B_0)\cap A)= \sum_{j=1}^d \varphi(R(b_j))\,.
\ee
Thus, if we replace $\delta$ in Version 3 by $2\delta$, we obtain that Version 2 is true for $d\geq \frac{1}{2} \delta^{-2}\varepsilon^{-1}$, and in particular for $d\geq \delta^{-2}\varepsilon^{-1}$. (We dropped the factor $\frac{1}{2}$ in Version 2 for the sake of simplicity.)
\end{proof}

\begin{proof}[Proof of Version 3 of Theorem~\ref{thm1} from Theorem~\ref{thm2}]
Let $\varepsilon,\delta>0$, and let $R_j:= R(b_j)$, so that $\{R_1,\ldots,R_d\}\sim u_{ONB(\X)}$. Then 
\be
f(R):= \frac{1}{d}\sum_{j=1}^d \varphi(R_j)
\ee
has mean (since each $R_j$ is $u$-distributed)
\be
\EEE f(R)
=\frac{1}{d} \sum_{i=1}^d \EEE \varphi(R_j) 
= \EEE_u(\varphi)
\ee
and variance (since the $R_j$ are exchangeable)
\begin{align}
\Var(f(R)) 
&=\frac{1}{d^2} \Bigl(\sum_{i=1}^d \Var(\varphi(R_i)) + \sum_{\substack{i,j=1\\ i\neq j}}^d \Cov\bigl(\varphi(R_i),\varphi(R_j)\bigr) \Bigr)\\
&=\frac{1}{d^2} \Bigl(d \Var(\varphi(R_1)) + (d^2-d) \Cov\bigl(\varphi(R_1),\varphi(R_2)\bigr) \Bigr)\\
&\leq\frac{1}{d} \Var_u(\varphi) + \frac{d-1}{d} \Bigl|\Cov\bigl(\varphi(R_1),\varphi(R_2)\bigr) \Bigr|\\
&\leq\frac{2}{d} \Var_u(\varphi)
\end{align}
by \eqref{CovdVar} of Theorem~\ref{thm2} for $d\geq 4$.
Chebyshev's inequality (see, e.g., \cite[p.~65]{Bill}) asserts that for any random variable $X$,
\begin{equation}\label{Chebyshev}
\PPP\Bigl( \bigl| X - \EEE X \bigr| \geq \eta \Bigr) 
\leq \frac{1}{\eta^2} \Var(X)\,.
\end{equation}
Setting $X=f(R)$ and $\eta=\delta\sqrt{\Var_u(\varphi)}$, we obtain that
\begin{align}
\PPP\Bigl( \bigl| f(R) - \EEE_u(\varphi) \bigr| \geq \delta\sqrt{\Var_u(\varphi)} \Bigr) 
&\leq \frac{1}{\delta^2\Var_u(\varphi)} \Var(f(R))\\
&\leq \frac{2}{\delta^2 d}  \,,\label{error}
\end{align}
which yields \eqref{thm1eqvarphi} if $2/(\delta^2 d)\leq \varepsilon$, thus proving Version 3 of Theorem~\ref{thm1}. 
\end{proof}

\begin{proof}[Proof of Theorem~\ref{thm2} from Theorem~\ref{thm3}]
Since \eqref{CovdVar}, when true, will remain true if $\varphi$ is changed by adding a constant, we can assume without loss of generality that $\varphi$ has mean 0. 
Thus,
\be
\Var_u(\varphi) = \int\limits_{\SSS(\X)}u(d\vx) \, |\varphi(\vx)|^2 = \|\varphi\|^2_{\Hilbert}\,.
\ee
Let $|1\rangle$ denote the constant 1 function in $\Hilbert$. The property of mean 0 can be expressed as $\scp{1}{\varphi}=0$, or $\varphi\in |1\rangle^\perp$.

We can think of the joint distribution of $b_1$ and $b_2$ as follows: $b_1$ is chosen uniformly on $\SSS(\X)$, then $b_2$ is chosen uniformly on $\SSS(b_1^\perp)$. Thus,
\begin{align}
\Cov\bigl(\varphi(b_1),\varphi(b_2)\bigr) 
&= \int\limits_{\SSS(\X)}u(d\vx) \int\limits_{\SSS(\vx^\perp)}u_{\SSS(\vx^\perp)}(d\vy)\, \varphi(\vx)\, \varphi(\vy)\\
&= \scp{\varphi}{T\varphi}
\end{align}
by Theorem~\ref{thm3}, with $\scp{\cdot}{\cdot}$ the inner product in $\Hilbert$. Since, by Theorem~\ref{thm3}, $T$ is self-adjoint, and $\CCC|1\rangle$ is the eigenspace of $T$ with eigenvalue 1, its orthogonal complement $|1\rangle^\perp$ is mapped by $T$ to itself. Since $\varphi$ lies in $|1\rangle^\perp$, we have by the Cauchy--Schwarz inequality that
\begin{align}
\bigl|\scp{\varphi}{T\varphi}\bigr|& \leq \|\varphi\| \, \|T\varphi\| \label{T1}\\
&\leq \|T\|_{|1\rangle^\perp} \, \|\varphi\|^2 \label{T2}
\end{align}
with $\|T\|_{|1\rangle^\perp}$ the operator norm of $T$ on $|1\rangle^\perp$. By Theorem~\ref{thm3} again, $T$ has pure point spectrum, so the operator norm of $T$ on $|1\rangle^\perp$ is the largest absolute non-1 eigenvalue, which is $1/(d-1)$. Equality holds in \eqref{T1} and \eqref{T2} when $\varphi$ is an associated eigenfunction. 
We thus have \eqref{CovdVar} for $d\geq 4$, including the statement that equality holds in \eqref{CovdVar} for suitable $\varphi$, viz., for the eigenfunction with absolute eigenvalue $1/(d-1)$. 
\end{proof}

\subsection{Proof of Theorem~\ref{thm3} in the Real Case}

For the proof of Theorem~\ref{thm3}, we need Lemma~\ref{lemma:avg} below, for which we offer two different proofs, the first of which is based on Lemma~\ref{lemma:invariant} below. Let $S_{\ell}$ denote the group of permutations of $\{1,\ldots,\ell\}$.

\begin{lemma}\label{lemma:invariant}
Let $d\geq 3$ and $\ell\geq 1$.
Suppose the rank-$\ell$ tensor $A\in (\RRR^d)^{\otimes \ell}$ is symmetric,
\be\label{sigmaA}
A_{i_1\ldots i_\ell}=A_{i_{\sigma(1)}\ldots i_{\sigma(\ell)}}
\qquad \forall \sigma \in S_\ell\,,
\ee
and invariant under (the obvious action of) the orthogonal group $O(d)$,
\be\label{MA}
\sum_{j_1\ldots j_\ell=1}^d  M_{i_1j_1}\cdots M_{i_\ell j_\ell}A_{j_1\ldots j_\ell} = A_{i_1\ldots i_\ell}
\qquad \forall M\in O(d)\,.
\ee
If $\ell$ is odd then $A=0$, and if $\ell$ is even then $A$ is a multiple of $\tilde{A}$ given by the symmetrization of $\delta_{i_1i_2}\delta_{i_3i_4}\cdots \delta_{i_{\ell-1}i_\ell}$,
\be
\tilde{A}_{i_1\ldots i_\ell} = \frac{1}{\ell!} \sum_{\sigma\in S_\ell} \delta_{i_{\sigma(1)} i_{\sigma(2)}} \delta_{i_{\sigma(3)} i_{\sigma(4)}} \cdots \delta_{i_{\sigma(\ell-1)} i_{\sigma(\ell)}}\,,
\ee
where $\delta_{ij}$ is the Kronecker symbol (unit matrix).
\end{lemma}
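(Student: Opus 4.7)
The plan is to translate the tensor statement into a question about invariant polynomials, where a standard invariant-theory fact trivializes the answer. Specifically, I would encode the symmetric rank-$\ell$ tensor $A$ by the degree-$\ell$ homogeneous polynomial $p(\vx) = \sum_{i_1,\ldots,i_\ell} A_{i_1\ldots i_\ell}\, x_{i_1}\cdots x_{i_\ell}$ on $\RRR^d$. The symmetry of $A$ together with polarization (the inverse being $A_{i_1\ldots i_\ell} = (\ell!)^{-1}\,\partial^\ell p/\partial x_{i_1}\cdots \partial x_{i_\ell}$) makes $A\mapsto p$ a linear bijection between symmetric rank-$\ell$ tensors and degree-$\ell$ homogeneous polynomials, so it suffices to identify $p$.

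Next I would verify that the invariance \eqref{MA} is equivalent to $p(M\vx)=p(\vx)$ for all $M\in O(d)$. Substituting $M^{T}\vx$ into the definition of $p$, expanding, and applying \eqref{MA} with renamed indices reproduces $p(\vx)$; since $O(d)$ is closed under transpose, this upgrades to $p(M\vx)=p(\vx)$ for all $M\in O(d)$.

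At this point I would invoke the classical fact that every $O(d)$-invariant polynomial on $\RRR^d$ is a polynomial in $|\vx|^2 = x_1^2+\cdots+x_d^2$. A short self-contained proof: $O(d)$ acts transitively on each sphere, so $p$ depends only on $|\vx|$; restricting to a coordinate axis shows the resulting one-variable function is an even polynomial, hence a polynomial in $|\vx|^2$. Combining this with the homogeneity of degree $\ell$, we conclude $p = 0$ when $\ell$ is odd, and $p(\vx) = c\,(|\vx|^2)^{\ell/2}$ for some $c \in \RRR$ when $\ell$ is even.

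To close the loop I would compute the polynomial associated to $\tilde{A}$. Because the variables $x_{i_k}$ commute, each permutation $\sigma \in S_\ell$ contributes the same monomial sum, and a direct calculation gives $\sum \tilde{A}_{i_1\ldots i_\ell}\, x_{i_1}\cdots x_{i_\ell} = (|\vx|^2)^{\ell/2}$. Comparing with the previous paragraph and using bijectivity of $A \mapsto p$ yields $A = c\tilde{A}$, as claimed. The only nontrivial step is the classical invariant-theoretic input, but its proof via transitivity on spheres needs only $d \geq 2$, comfortably covered by the hypothesis $d \geq 3$; accordingly I expect no real obstacle along the way.
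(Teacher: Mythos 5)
Your proposal is correct and follows essentially the same route as the paper: encode the symmetric tensor as a homogeneous polynomial, use transitivity of $O(d)$ on spheres together with homogeneity to force $p=c\,|\vx|^{\ell}$ (zero when $\ell$ is odd), and match $\tilde{A}$ with $(|\vx|^2)^{\ell/2}$. Your version merely spells out a few steps the paper leaves implicit (bijectivity of the tensor--polynomial correspondence via polarization, and the explicit computation of the polynomial attached to $\tilde{A}$).
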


\begin{proof}[Proof of Lemma~\ref{lemma:invariant}]
The lemma can be translated into a statement about homogeneous polynomials $P(x_1,\ldots,x_d)$ of degree $\ell$; the translation is based on writing such a polynomial in the form
\be\label{PA}
P(x_1,\ldots,x_d) = \sum_{i_1,\ldots,i_\ell=1}^d A_{i_1\ldots i_\ell}\, x_{i_1}\cdots x_{i_\ell}\,,
\ee
where the coefficients $A_{i_1\ldots i_\ell}$ can be taken to be symmetric under permutation of the indices. Lemma~\ref{lemma:invariant} is thus equivalent to the following: 

\textit{Suppose the homogeneous polynomial $P(x_1,\ldots,x_d)$ of degree\footnote{Although a polynomial of degree $\ell$ is usually taken to be non-zero, we include here the possibility $P=0$.} $\ell$ is $O(d)$-invariant. If $\ell$ is odd then $P=0$, and if $\ell$ is even then $P$ is a multiple of $(x_1^2+ \ldots + x_d^2)^{\ell/2}$.}

To see this, note that since $P$ is $O(d)$-invariant, its restriction to $\SSS(\RRR^d)$ must be constant. Since $P$ is homogeneous of degree $\ell$, it must be of the form $c|\vx|^2$, where $|\vx|=\sqrt{x_1^2+\ldots+x_d^2}$, and $c$ is a constant. If $\ell$ is odd, then invariance under the matrix $M\in O(d)$ with entries $M_{ij}=-\delta_{ij}$ implies that $P=-P$, so $P=0$.
\end{proof}

\begin{lemma}\label{lemma:avg}
Suppose $d\geq 3$ and $\ell\geq 1$. Let $P$ be a homogeneous real polynomial of degree $\ell$ in $d$ variables, 
\be
P(x_1,\ldots,x_d) = \sum_{i_1\ldots i_\ell=1}^d C_{i_1\ldots i_\ell} \, x_{i_1}\cdots x_{i_\ell}
\ee
with a symmetric tensor $C$.
Then the average of $P$ over the unit sphere is
\be\label{Pavg}
\int\limits_{\SSS(\RRR^d)} \!\! u(d\vx)\, P(\vx) = 
\begin{cases}
0 & \text{if $\ell$ odd}\\[3mm]
\alpha_{\ell,d} \sum\limits_{i_1\ldots i_{\ell/2}=1}^d C_{i_1i_1i_2i_2\ldots i_{\ell/2}i_{\ell/2}} & \text{if $\ell$ even}
\end{cases}
\ee
with
\be\label{alpha}
\alpha_{\ell,d} = \frac{(\ell-1)!!(d-2)!!}{(\ell+d-2)!!}\,.
\ee
\end{lemma}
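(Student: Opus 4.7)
The plan is to evaluate the rank-$\ell$ tensor
\be
N_{i_1\ldots i_\ell}:=\int\limits_{\SSS(\RRR^d)} u(d\vx)\, x_{i_1}\cdots x_{i_\ell}
\ee
via Lemma~\ref{lemma:invariant}, and then contract it with $C$ to compute $\int P\,u(d\vx)=\sum_{i_1\ldots i_\ell} C_{i_1\ldots i_\ell} N_{i_1\ldots i_\ell}$. The tensor $N$ is symmetric (as ordinary multiplication is commutative) and $O(d)$-invariant: for any $M\in O(d)$, the change of variables $\vx\mapsto M\vx$ leaves $u$ unchanged, which is exactly the identity \eqref{MA} applied to $N$.

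For $\ell$ odd, Lemma~\ref{lemma:invariant} immediately forces $N=0$, giving the first case of \eqref{Pavg} at once. For $\ell$ even, Lemma~\ref{lemma:invariant} gives $N=\alpha_{\ell,d}\tilde A$ for some scalar $\alpha_{\ell,d}$, and I then need to evaluate $\sum C\tilde A$ and identify the constant. For the first task: since $C$ is symmetric, the quantity $\sum_{i_1\ldots i_\ell} C_{i_1\ldots i_\ell}\delta_{i_{\sigma(1)}i_{\sigma(2)}}\cdots\delta_{i_{\sigma(\ell-1)}i_{\sigma(\ell)}}$ is independent of $\sigma\in S_\ell$ (relabel the dummy indices by $\sigma^{-1}$ and use symmetry of $C$), so all $\ell!$ terms in the definition of $\tilde A$ contribute identically and cancel the $1/\ell!$ prefactor, leaving exactly $\sum_{i_1\ldots i_{\ell/2}=1}^d C_{i_1 i_1 i_2 i_2\cdots i_{\ell/2}i_{\ell/2}}$, which matches the right-hand side of \eqref{Pavg}.

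To determine $\alpha_{\ell,d}$ I will test \eqref{Pavg} on the particular polynomial $P(\vx)=x_1^\ell$, whose symmetric tensor has the single non-zero entry $C_{1\ldots 1}=1$. Both sums collapse to one term, and the identity reduces to $\alpha_{\ell,d}=\int_{\SSS(\RRR^d)} x_1^\ell\,u(d\vx)$. This marginal is evaluated by the classical Gaussian/polar trick: compute $\int_{\RRR^d} x_1^\ell e^{-|\vx|^2/2}d\vx$ in two ways. Factoring the Gaussian gives $(2\pi)^{d/2}(\ell-1)!!$; in spherical coordinates, using $|\SSS^{d-1}|=2\pi^{d/2}/\Gamma(d/2)$ and $\int_0^\infty r^{\ell+d-1}e^{-r^2/2}dr=2^{(\ell+d-2)/2}\Gamma((\ell+d)/2)$, it equals the product of these with $\int x_1^\ell u(d\vx)$. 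Solving yields $\int x_1^\ell u(d\vx)=2^{-\ell/2}(\ell-1)!!\,\Gamma(d/2)/\Gamma((\ell+d)/2)$.

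The main (purely bookkeeping) obstacle I anticipate is matching this Gamma-function expression to the double-factorial form \eqref{alpha}. Splitting on the parity of $d$ and using $(2k)!!=2^k k!$, $(2k-1)!!=(2k)!/(2^k k!)$, and $\Gamma(k+\tfrac12)=(2k-1)!!\sqrt{\pi}/2^k$, both cases collapse to $2^{-\ell/2}\Gamma(d/2)/\Gamma((\ell+d)/2)=(d-2)!!/(\ell+d-2)!!$, yielding $\alpha_{\ell,d}=(\ell-1)!!(d-2)!!/(\ell+d-2)!!$ and completing the proof.
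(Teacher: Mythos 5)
Your proof is correct and follows essentially the same route as the paper's first proof: you invoke Lemma~\ref{lemma:invariant} on the moment tensor $\int u(d\vx)\,x_{i_1}\cdots x_{i_\ell}$, use the symmetry of $C$ to collapse the symmetrized contraction, and pin down $\alpha_{\ell,d}$ by testing on $x_1^\ell$. The only (harmless) difference is that you evaluate the single moment $\int_{\SSS(\RRR^d)}x_1^\ell\,u(d\vx)$ by the Gaussian/polar trick rather than by the paper's direct spherical-coordinate integral $\int_0^\pi\sin^{d-2}\theta\cos^\ell\theta\,d\theta$; this borrows the Gaussianization idea from the paper's second proof, but only for the normalization constant, and your double-factorial bookkeeping checks out.
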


\begin{proof}[First proof of Lemma~\ref{lemma:avg}]
By linearity, the average of $P$ must be
\be\label{AC}
\sum_{i_1\ldots i_\ell=1}^d A_{i_1\ldots i_\ell} \, C_{i_1\ldots i_\ell}
\ee
with
\be\label{Adef}
A_{i_1\ldots i_\ell} = \int\limits_{\SSS(\RRR^d)} \!\! u(d\vx) \, x_{i_1}\cdots x_{i_\ell}\,.
\ee
The tensor $A$ is symmetric and $O(d)$-invariant. 
By Lemma~\ref{lemma:invariant}, $A=0$ for odd $\ell$ and
\be\label{AalphatildeA}
A=\alpha_{\ell,d} \tilde{A}
\ee
for even $\ell$, with $\tilde{A}$ as in Lemma~\ref{lemma:invariant} and some constant $\alpha_{\ell,d}$. Thus, for even $\ell$, the average of $P$ is given by \eqref{AC} with $A$ replaced by $\alpha_{\ell,d}\tilde{A}$. Since $C$ is symmetric, this value is equal to
\be
\alpha_{\ell,d} \sum_{i_1\ldots i_\ell=1}^d \delta_{i_1i_2}\delta_{i_3i_4}\cdots \delta_{i_{\ell-1}i_\ell} \, C_{i_1\ldots i_\ell}\,.
\ee
(That is, it is not necessary to symmetrize the product of the $\delta$s, since $C$ is symmetric.) This proves \eqref{Pavg}.

To compute $\alpha_{\ell,d}$ for even $\ell$, it suffices to compare one nonzero component of $A$ and $\tilde{A}$, say $A_{11\ldots 1}$ (the average of $x_1^\ell$) and $\tilde{A}_{11\ldots 1}=1$. To compute $A_{11\ldots 1}$, we use spherical coordinates (with $r=1$), setting $x_1=\cos \theta$. 
Let 
\be\label{cdef}
c(d)=
\begin{cases}
   2&\text{if $d$ odd}\\
   \pi&\text{if $d$ even} 
\end{cases}
\ee
and
\be
g(n) = \prod_{k=1}^n c(k) = \underbrace{\: 2\pi2\pi2\cdots\:}_{\text{$n$ factors of 2 or $\pi$}}=
\begin{cases}
   2^{(n+1)/2}\pi^{(n-1)/2} & \text{if $n$ odd}\\
   (2\pi)^{n/2} & \text{if $n$ even.} 
\end{cases}
\ee
We note \cite{Sn} that for $n\geq 2$, we have
\be
|\SSS(\RRR^n)|=\frac{n\pi^{n/2}}{\Gamma(n/2+1)}
= \frac{g(n)}{(n-2)!!}
\ee
for the surface area of $\SSS(\RRR^n)$. Thus, for 
even $\ell$,
\begin{align}
\alpha_{\ell,d} 
&=\int_{\SSS(\RRR^d)} x_1^{\ell} \, u(d\vx) \\
&=\frac{1}{|\SSS(\RRR^d)|} \int_0^\pi d\theta\, \cos^\ell \theta \, \sin^{d-2}\theta \, |\SSS(\RRR^{d-1})| \\
%&=\frac{|\SSS(\RRR^{d-1})|}{|\SSS(\RRR^d)|}\int_0^\pi d\theta\, \sin^{d-2}\theta \cos^{\ell}\theta\\
&=\frac{(d-2)!!}{c(d)\,(d-3)!!}c(d)\frac{(\ell-1)!!(d-3)!!}{(\ell+d-2)!!}\\
&=\frac{(\ell-1)!!(d-2)!!}{(\ell+d-2)!!}
\end{align}
using
\be
\int_0^\pi d\theta\, \sin^p \theta \cos^q \theta =
  c(p)\frac{(q-1)!!(p-1)!!}{(p+q)!!}
\ee
if $q$ is even. 
This proves \eqref{alpha}.
\end{proof}

\begin{proof}[Second proof of Lemma~\ref{lemma:avg}]
This proof is based on Gaussianization (this strategy was suggested to us by B.~Collins). Let $\vY=(Y_1,\ldots,Y_d)$ be a random vector consisting of $d$ independent standard normal random variables, and let $Z=|\vY|$ and $\vX=\vY/Z$; then $\vX$ and $Z$ are independent, and $\vX\sim u$. For $P(x_1,\ldots,x_d) = x_1^{n_1}\cdots x_d^{n_d}$ with $n_1+\ldots+n_d=\ell$, $\EEE P(\vY)$ is, on the one hand, equal to $\EEE P(\vX Z) = \EEE [Z^\ell P(\vX)]=\EEE Z^\ell \, \EEE P(\vX)$ (where the last factor is the quantity we want to compute) and, on the other hand, equal to the product of the $n_j$-th moments of the standard normal distribution; it is known that the $n$-th moment is 0 if $n$ is odd and $(n-1)!!$ if $n$ is even. Thus, $\EEE P(\vX)=0$ for odd $\ell$. 
Since $Z^2\sim\chi^2(d)$, we have that for even $\ell$, $\EEE Z^\ell$ is the $\ell/2$-th moment of the $\chi^2$-distribution with $d$ degrees of freedom, which is known \cite{chi2} to be 
$(d+\ell-2)!!/(d-2)!!$. Thus,
\be
\int_{\SSS(\RRR^d)}u(d\vx)\, x_1^{n_1}\cdots x_d^{n_d} = 
\begin{cases} 
0&\text{if any $n_j$ is odd}\\
\frac{(d-2)!!(n_1-1)!!\cdots (n_d-1)!!}{(d+\ell-2)!!}&\text{if all $n_j$ are even,}
\end{cases}
\ee
which is equivalent to Lemma~\ref{lemma:avg}.
\end{proof}

\begin{proof}[Proof of Theorem~\ref{thm3} in the real case $\X=\RRR^d$]
We first show that the expression \eqref{Tdef} defining $T$ is well defined for any $\psi\in L^2=L^2(\SSS(\RRR^d),u,\CCC)$. In fact, it is well defined for any $\psi \in L^1 =L^1(\SSS(\RRR^d),u,\CCC)$. (Note $L^2\subset L^1$ for a finite measure space such as $(\SSS(\RRR^d),u)$.) To see this, we 
use that $u(d\vx) u_{\SSS(\vx^\perp)}(d\vy)=u(d\vy)u_{\SSS(\vy^\perp)}(d\vx)$; indeed, both equal the unique rotation invariant measure on the subset where $\vx\perp \vy$. Now it follows that for $\psi\in L^1$,
\begin{align}
\label{eqfirst}
\int_{\SSS(\X)}u(d\vx) \Biggl| \int_{\SSS(\vx^\perp)} u_{\SSS(\vx^\perp)}(d\vy) \, \psi(\vy) \Biggr|
&\leq \int_{\SSS(\X)}u(d\vx) \int_{\SSS(\vx^\perp)} u_{\SSS(\vx^\perp)}(d\vy) \, \bigl| \psi(\vy) \bigr|\\
&= \int_{\SSS(\X)}u(d\vy) \int_{\SSS(\vy^\perp)} u_{\SSS(\vy^\perp)}(d\vx) \, \bigl| \psi(\vy) \bigr|\\
&= \int_{\SSS(\X)}u(d\vy) \, \bigl| \psi(\vy) \bigr|\,,
\end{align}
so $T\psi$ is well defined almost everywhere, lies in $L^1$ again, has norm $\|T\psi\|_1 \leq \|\psi\|_1$, and is independent of the choice of representative in the equivalence class that is a vector in $L^1$. Since $L^2 \subset L^1$, the integral formula \eqref{Tdef} is well defined  also for any $L^2$ function. To see that $T\psi\in L^2$ for $\psi\in L^2$, note that by the Cauchy--Schwarz inequality, $|\int \mu(dx)\, f(x)|^2 \leq \int \mu(dx)\, |f(x)|^2$ for any normalized measure $\mu$, so
\be
|T\psi(\vx)|^2 \leq \int_{\SSS(\vx^\perp)} u_{\SSS(\vx^\perp)}(d\vy)\, |\psi(\vy)|^2\,,
\ee
and thus
\begin{align}
\int_{\SSS(\X)}u(d\vx) \, |T\psi(\vx)|^2
&\leq \int_{\SSS(\X)} u(d\vx) \int_{\SSS(\vx^\perp)} u_{\SSS(\vx^\perp)}(d\vy) \, |\psi(\vy)|^2\\
&\leq \int_{\SSS(\X)} u(d\vy) \int_{\SSS(\vy^\perp)} u_{\SSS(\vy^\perp)}(d\vx) \, |\psi(\vy)|^2\\
&\leq \int_{\SSS(\X)} u(d\vy) \, |\psi(\vy)|^2\,,
\end{align}
so $T\psi \in L^2$ for $\psi\in L^2$ and $\|T\psi\|_2\leq \|\psi\|_2$, so $T$ is bounded. To see that it is self-adjoint, note that
\begin{align}
\scp{T\psi}{\chi} 
&= \int_{\SSS(\X)} u(d\vx) \Biggl(\int_{\sphere(\vx^\perp)} u_{\SSS(\vx^\perp)}(d\vy)\, \psi(\vy) \Biggr)^* \chi(\vx)\\
&= \int_{\SSS(\X)} u(d\vx) \int_{\sphere(\vx^\perp)} u_{\SSS(\vx^\perp)}(d\vy)\, \psi^*(\vy) \chi(\vx)\\
&= \int_{\SSS(\X)} u(d\vy) \int_{\sphere(\vy^\perp)} u_{\SSS(\vy^\perp)}(d\vx)\, \psi^*(\vy) \chi(\vx)\\
&=\scp{\psi}{T\chi}\,.\label{eqlast}
\end{align}

Next, observe that $T$ is $O(d)$-invariant,
\be
U(M)TU(M)^{-1}=T \qquad \forall M\in O(d)\,, 
\ee
where $(U(M)\psi)(\vx) = \psi(M\vx)$.
For $\ell=0,1,2,\ldots$, let $\mathscr{A}_\ell$ be the set of all harmonic homogeneous polynomials of degree $\ell$ in $d$ variables; for $\ell=0$ and $1$, $\mathscr{A}_\ell$ is just the set of homogeneous polynomials of degree $\ell$, while for $\ell\geq 2$, the elements are of the form 
\be
P(x_1,\ldots,x_d) = \sum_{i_1\ldots i_\ell=1}^d C_{i_1\ldots i_\ell} \, x_{i_1}\cdots x_{i_\ell}
\ee
with traceless symmetric $C$, i.e.,
\be
\sum_{i=1}^d C_{i_1\ldots i_{\ell-2}ii} = 0
\ee 
for all $i_1,\ldots,i_{\ell-2}\in\{1,\ldots,d\}$. Let $\Hilbert_\ell$ be the set of the restrictions of the $\mathscr{A}_\ell$ functions to $\SSS(\RRR^d)$. The functions in $\Hilbert_\ell$ form the $d$-dimensional analog of the spherical harmonics. It is known (e.g., \cite{VK93,sphericalharmonics}) that the $\Hilbert_\ell$ are irreducible representation spaces of $O(d)$, that they are pairwise inequivalent representations, that they are mutually orthogonal in $\Hilbert$, and that together they span $\Hilbert$ in the $L^2$ norm,
\be
\Hilbert=\bigoplus_{\ell=0}^\infty \Hilbert_\ell\,.
\ee

From this it follows by Schur's lemma that $T$, since it is $O(d)$-invariant, is a multiple of the identity on each $\Hilbert_\ell$. Thus, $T$ has 
pure point spectrum, and each eigenspace must be either one of the $\Hilbert_\ell$ or the sum of several of the $\Hilbert_\ell$. \z{(This observation was made before in \cite{KR11}.)}

To compute the eigenvalue $\tau_\ell$ of $T$ on $\Hilbert_\ell$, it suffices to consider any $P\in\Hilbert_\ell$ and compare the average of $P$ over the equator $\SSS(\RRR^{d-1})=\{\vx\in\SSS(\RRR^d):x_d=0\}$, or $TP(0,0,\ldots,1)$, with the value of $P$ at the north pole, $P(0,\ldots,0,1)=C_{ddd\ldots d}$.

By Lemma~\ref{lemma:avg}, 
the average of $P(\vx)$ with traceless $C$ 
over the equator is 0 for odd $\ell$, while for even $\ell\geq 2$ it is
\begin{align}
\int\limits_{\SSS(\RRR^{d-1})} u_{d-1}(d\vx)\,P(\vx) 
&=\alpha_{\ell,d-1}\sum_{i_1\ldots i_{\ell/2}=1}^{d-1} C_{i_1i_1i_2i_2\ldots i_{\ell/2}i_{\ell/2}}\\
&=-\alpha_{\ell,d-1}\sum_{i_2\ldots i_{\ell/2}=1}^{d-1}C_{ddi_2i_2\ldots i_{\ell/2}i_{\ell/2}}\\
&=(-1)^2\alpha_{\ell,d-1}\sum_{i_3\ldots i_{\ell/2}=1}^{d-1} C_{ddddi_3i_3\ldots i_{\ell/2}i_{\ell/2}}\\
&=(-1)^{\ell/2}\alpha_{\ell,d-1}\,C_{dd\ldots d}\,.
\end{align}
Thus, the eigenvalue of the operator $T$ on $\Hilbert_\ell$ is 
\be
\tau_\ell=
\begin{cases}
  0 & \text{if $\ell$ odd}\\
  (-1)^{\ell/2}\alpha_{\ell,d-1} & \text{if $\ell\geq 2$ even.} 
\end{cases}
\ee
We can now identify the largest absolute eigenvalues. Since, by \eqref{alpha},
\be
\alpha_{\ell+2,d} = \frac{\ell+1}{\ell+d} \alpha_{\ell,d} < \alpha_{\ell,d}\,,
\ee
we have that
\be
\max_{\ell=2,4,6,\ldots} \alpha_{\ell,d} = \alpha_{2,d} = \frac{1}{d}\,.
\ee
Thus, 1 does not occur as an eigenvalue except for constant functions, and the largest absolute non-1 eigenvalue is $\alpha_{2,d-1}=1/(d-1)$.
\end{proof}

\subsection{Proof of Theorem~\ref{thm3} in the Complex Case}

Lemmas~\ref{lemma:invariantC} and \ref{lemma:avgC} provide the complex analogs of Lemmas~\ref{lemma:invariant} and \ref{lemma:avg}. Lemma~\ref{lemma:avgC} is equivalent to Theorem~18 in \cite{CC13}; it is proved there using Gaussianization and here in a different way using Lemma~\ref{lemma:invariantC}.

\begin{lemma}\label{lemma:invariantC}
Let $d\geq 2$ and $\ell,\ell'\in\{0,1,2,\ldots\}$.
Suppose the rank-$(\ell+\ell')$ tensor $A\in (\CCC^d)^{\otimes (\ell+\ell')}$ is symmetric in the first $\ell$ indices and symmetric in the last $\ell'$ indices,
\be\label{sigmaAC}
A_{i_1\ldots i_\ell i'_1\ldots i'_{\ell'}}=A_{i_{\sigma(1)}\ldots i_{\sigma(\ell)} i'_{\sigma'(1)}\ldots i'_{\sigma'(\ell')}}
\qquad \forall \sigma \in S_\ell \:\forall \sigma' \in S_{\ell'}\,,
\ee
and invariant under $U(d)$, acting in the obvious way on the first $\ell$ indices and in the conjugate way on the last $\ell'$ indices,
\be\label{MAC}
\sum_{j_1\ldots j_\ell,j'_1\ldots j'_{\ell'}=1}^d  M_{i_1j_1}\cdots M_{i_\ell j_\ell}\,
\overline{M}_{i'_1j'_1} \cdots \overline{M}_{i'_{\ell} j'_{\ell'}} A_{j_1\ldots j_\ell j'_1\ldots j'_{\ell'}}
 = A_{i_1\ldots i_\ell i'_1 \ldots i'_{\ell'}}
\quad \forall M\in U(d)\,.
\ee
If $\ell\neq \ell'$ then $A=0$, and if $\ell=\ell'$ then $A$ is a multiple of $\tilde{A}$ given by the symmetrization of $\delta_{i_1i'_1}\delta_{i_2i'_2}\cdots \delta_{i_{\ell}i'_\ell}$ in either the primed or the unprimed indices,
\be
\tilde{A}_{i_1\ldots i_\ell i'_1\ldots i'_\ell} = \frac{1}{\ell!} \sum_{\sigma\in S_\ell} \delta_{i_{\sigma(1)}i'_1} \delta_{i_{\sigma(2)} i'_2} \cdots \delta_{i_{\sigma(\ell)} i'_\ell}\,.
\ee
\end{lemma}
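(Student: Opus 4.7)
The plan is to translate the tensor statement into a polynomial statement, paralleling the strategy used for Lemma \ref{lemma:invariant} in the real case. To the tensor $A$, symmetric in the unprimed indices and symmetric in the primed indices, I associate the polynomial
\[
P(\vz, \vzbar) = \sum_{i_1, \ldots, i_\ell, i'_1, \ldots, i'_{\ell'} = 1}^d A_{i_1 \ldots i_\ell i'_1 \ldots i'_{\ell'}} \, z_{i_1} \cdots z_{i_\ell} \, \bar z_{i'_1} \cdots \bar z_{i'_{\ell'}}
\]
on $\CCC^d$, bihomogeneous of bidegree $(\ell, \ell')$ in $(\vz, \vzbar)$. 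The twofold symmetry of $A$ makes this correspondence a bijection with such polynomials, and the hypothesis \eqref{MAC} translates into $U(d)$-invariance of $P$: $P(M\vz, \overline{M\vz}) = P(\vz, \vzbar)$ for every $M \in U(d)$. It therefore suffices to show that such a $P$ vanishes when $\ell \neq \ell'$, and equals a constant multiple of $\bigl(\sum_j z_j \bar z_j\bigr)^\ell$ when $\ell = \ell'$.

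The polynomial statement then splits into two short steps. First, I would exploit the central subgroup $\{e^{i\theta} I : \theta \in \RRR\} \subset U(d)$ of scalar phases; invariance under these forces
\[
e^{i(\ell - \ell')\theta} P(\vz, \vzbar) = P(\vz, \vzbar) \qquad \forall \theta \in \RRR,
\]
so that $P \equiv 0$ whenever $\ell \neq \ell'$, which yields $A = 0$ in that case. Second, assuming $\ell = \ell'$, I use that $U(d)$ acts transitively on $\SSS(\CCC^d)$, so $P$ restricted to the unit sphere equals a constant $c$. Since $P$ is bihomogeneous of total degree $2\ell$ under the real scaling $\vz \mapsto r\vz$ (hence $\vzbar \mapsto r \vzbar$) for $r > 0$, writing $\vz = |\vz|\, \hat\vz$ with $\hat\vz \in \SSS(\CCC^d)$ gives
\[
P(\vz, \vzbar) = |\vz|^{2\ell} \, c = c\, (|\vz|^2)^\ell = c \Bigl(\sum_{j=1}^d z_j \bar z_j\Bigr)^\ell.
\]
Expanding the $\ell$-th power and matching coefficients (using the uniqueness of the symmetric-in-each-block tensor representing a given polynomial) identifies the associated tensor as a scalar multiple of $\tilde A$, as claimed.

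The main obstacle is essentially non-existent; the one mildly subtle observation is that $U(d)$ contains the central $U(1)$ of scalar phases, and it is precisely this $U(1)$ that forces the balance $\ell = \ell'$ for a nonzero invariant, which is the feature absent from the real case (where the analogous obstruction was only the parity of $\ell$ coming from $M = -I$). Once phase invariance and transitivity of $U(d)$ on $\SSS(\CCC^d)$ are combined with bihomogeneity, the polynomial $P$ is determined up to a scalar, and the tensor conclusion follows.
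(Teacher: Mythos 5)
Your proposal is correct and follows essentially the same route as the paper's proof: translate the tensor into a bi-homogeneous polynomial, use the central phases $e^{i\theta}I$ to force $\ell=\ell'$, and then combine transitivity of $U(d)$ on $\SSS(\CCC^d)$ with real-homogeneity of degree $2\ell$ to conclude $P=c\,|\vz|^{2\ell}$. The only difference is that you spell out the final coefficient-matching step identifying the tensor of $(\sum_j z_j\zbar_j)^\ell$ with $\tilde{A}$, which the paper leaves implicit.
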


\begin{proof}[Proof of Lemma~\ref{lemma:invariantC}]
Also this lemma can be translated into a statement about polynomials. The relevant polynomials to consider are the polynomials $P(z_1,\ldots,z_d,\zbar_1,\ldots,\zbar_d)$ homogeneous of degree $\ell$ in $z$ and degree $\ell'$ in $\zbar$; they can be thought of as complex polynomials in $2d$ complex variables, with the conjugates of $z_1,\ldots,z_d$ inserted as the last $d$ variables; they can be written as
\be\label{PAC}
P(z_1,\ldots,z_d,\zbar_1,\ldots,\zbar_d) = \sum_{i_1,\ldots,i_\ell,i'_1,\ldots,i'_{\ell'}=1}^d A_{i_1\ldots i_\ell,i'_1\ldots i'_{\ell'}}\, z_{i_1}\cdots z_{i_\ell}\, \zbar_{i'_1} \cdots \zbar_{i'_{\ell'}}\,.
\ee
The pair $(\ell,\ell')$ is called the \emph{bi-degree}\footnote{Again, we include the possibility $P=0$.} of $P$.
Lemma~\ref{lemma:invariantC} can then be paraphrased as:

\textit{Suppose the bi-homogeneous polynomial $P(z_1,\ldots,z_d,\zbar_1,\ldots,\zbar_d)$ of bi-degree $(\ell,\ell')$ is $U(d)$-invariant. If $\ell\neq \ell'$ then $P=0$, and if $\ell=\ell'$ then $P$ is a multiple of $(|z_1|^2+ \ldots + |z_d|^2)^{\ell}$.}

Considering \eqref{MAC} for $M=e^{i\theta}I$ with $\theta\in \RRR$ and $I$ the identity matrix, we obtain that $e^{i(\ell-\ell')\theta} A = A$, so for $\ell\neq\ell'$ we have that $A=0$ (and $P=0$).
Now assume $\ell=\ell'$. Since $P$ is $U(d)$-invariant, its restriction to $\SSS(\CCC^d)$ must be constant. Since $P(\vz,\vzbar)$ is real-homogeneous of degree $2\ell$, it must be of the form $c|\vz|^{2\ell}$, where $c$ is a complex constant and $|\vz|=\sqrt{|z_1|^2+\ldots+|z_d|^2}$.
\end{proof}

\begin{lemma}\label{lemma:avgC}
Suppose $d\geq 2$ and $\ell,\ell'\in\{0,1,2,\ldots\}$. Let $P(\vz,\vzbar)$ be a bi-homogeneous polynomial of bi-degree $(\ell,\ell')$, 
\be
P(\vz,\vzbar) = \sum_{i_1\ldots i_\ell, i'_1\ldots i'_{\ell'}=1}^d C_{i_1\ldots i_\ell i'_1 \ldots i'_{\ell'}} \, 
z_{i_1}\cdots z_{i_\ell} \, \zbar_{i'_1} \cdots \zbar_{i'_{\ell'}}
\ee
with a complex tensor $C$ that is symmetric in the primed and in the unprimed indices.
Then the average of $P$ over the unit sphere is
\be\label{PavgC}
\int\limits_{\SSS(\CCC^d)} \!\! u(d\vz)\, P(\vz,\vzbar) = 
\begin{cases}
0 & \text{if $\ell\neq \ell'$}\\[3mm]
\beta_{\ell,d} \sum\limits_{i_1\ldots i_{\ell}=1}^d C_{i_1\ldots i_{\ell}i_1\ldots i_{\ell}} & \text{if $\ell=\ell'\geq 1$}
\end{cases}
\ee
with 
\be\label{beta}
\beta_{\ell,d}= \binom{\ell+d-1}{\ell}^{\!\!-1}\,.
\ee
\end{lemma}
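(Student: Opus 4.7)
My plan is to mirror the first proof of Lemma \ref{lemma:avg}, substituting Lemma \ref{lemma:invariantC} for Lemma \ref{lemma:invariant}. First I would use linearity to write the average as
\[
\int_{\SSS(\CCC^d)} u(d\vz)\, P(\vz,\vzbar) = \sum_{i_1\ldots i_\ell,\, i'_1\ldots i'_{\ell'}} A_{i_1\ldots i_\ell i'_1\ldots i'_{\ell'}}\, C_{i_1\ldots i_\ell i'_1\ldots i'_{\ell'}}
\]
where
\[
A_{i_1\ldots i_\ell i'_1\ldots i'_{\ell'}} := \int_{\SSS(\CCC^d)} u(d\vz)\, z_{i_1}\cdots z_{i_\ell}\,\zbar_{i'_1}\cdots \zbar_{i'_{\ell'}}.
\]
Because $u$ is $U(d)$-invariant on $\SSS(\CCC^d)$, the tensor $A$ is symmetric in its primed indices, symmetric in its unprimed indices, and invariant under $U(d)$ acting in the obvious way on the unprimed indices and conjugately on the primed ones; so $A$ satisfies the hypotheses of Lemma \ref{lemma:invariantC}.

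Invoking Lemma \ref{lemma:invariantC} gives $A=0$ when $\ell\neq \ell'$, which settles the first case of \eqref{PavgC}. When $\ell=\ell'$, the lemma gives $A = \beta_{\ell,d}\,\tilde{A}$ for some scalar $\beta_{\ell,d}$. Substituting into the linearity formula and using the symmetry of $C$ in its primed indices, I can drop the symmetrization over $\sigma\in S_\ell$ inside $\tilde{A}$ (all $\ell!$ permutations produce the same contraction against $C$), obtaining
\[
\beta_{\ell,d}\sum_{i_1\ldots i_\ell=1}^d C_{i_1\ldots i_\ell i_1\ldots i_\ell},
\]
which is the desired right-hand side up to identifying the constant $\beta_{\ell,d}$.

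To pin down $\beta_{\ell,d}$, I compare one convenient nonzero entry of $A$ with the corresponding entry of $\tilde{A}$. Taking all primed and unprimed indices equal to $1$ gives $\tilde{A}_{1\ldots 1\,1\ldots 1} = 1$ and $A_{1\ldots 1\,1\ldots 1} = \int_{\SSS(\CCC^d)} |z_1|^{2\ell}\, u(d\vz)$, so $\beta_{\ell,d}$ equals this single integral. I would evaluate it by Gaussianization: taking $\vY$ a standard complex Gaussian on $\CCC^d$ and $\vz = \vY/\|\vY\|$ (so $\vz\sim u$ and is independent of $\|\vY\|$), it is standard that $|z_1|^2$ is Beta$(1,d-1)$-distributed with density $(d-1)(1-w)^{d-2}$ on $[0,1]$. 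Then
\[
\int_0^1 w^\ell (d-1)(1-w)^{d-2}\,dw = (d-1)\,B(\ell+1,d-1) = \frac{\ell!\,(d-1)!}{(\ell+d-1)!} = \binom{\ell+d-1}{\ell}^{-1},
\]
yielding $\beta_{\ell,d}$ as in \eqref{beta}.

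I do not expect any genuine obstacle: the structural content sits entirely in Lemma \ref{lemma:invariantC}, and the constant is fixed by a one-parameter Beta moment. The only mild subtlety is keeping track of why the symmetrization in $\tilde{A}$ may be dropped against $C$, which follows from $C$ being symmetric in its primed indices.
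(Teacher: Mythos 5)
Your proposal is correct, and its structural part (linearity, the moment tensor $A$, invoking Lemma~\ref{lemma:invariantC} to get $A=0$ for $\ell\neq\ell'$ and $A=\beta_{\ell,d}\tilde A$ for $\ell=\ell'$, then dropping the symmetrization against the symmetric $C$) is exactly the paper's argument. The only divergence is in evaluating the constant $\beta_{\ell,d}=\int_{\SSS(\CCC^d)}|z_1|^{2\ell}\,u(d\vz)$: the paper either expands $(x_1^2+x_2^2)^\ell$ binomially on $\SSS(\RRR^{2d})$ and appeals to the real Lemma~\ref{lemma:avg} plus a hypergeometric/Zeilberger identity, or integrates directly in adapted spherical coordinates, whereas you use the standard fact that $|z_1|^2$ is Beta$(1,d-1)$-distributed and compute a single Beta moment $(d-1)B(\ell+1,d-1)=\binom{\ell+d-1}{\ell}^{-1}$. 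Your route is cleaner and avoids the combinatorial identity; its one unproved input, the Beta law of $|z_1|^2$, follows immediately from the Gaussianization you mention (a Gamma$(1)$/Gamma$(d-1)$ ratio), so there is no gap.
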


\begin{proof}[Proof of Lemma~\ref{lemma:avgC}]
By linearity, the average of $P$ must be
\be
\sum_{i_1\ldots i_{\ell},i'_1\ldots i'_{\ell'}=1}^d A_{i_1\ldots i_{\ell},i'_1\ldots i'_{\ell'}} \, C_{i_1\ldots i_{\ell},i'_1\ldots i'_{\ell'}}
\ee
with
\be
A_{i_1\ldots i_{\ell},i'_1\ldots i'_{\ell'}} = \int\limits_{\SSS(\CCC^d)} u(d\vz) \, z_{i_1}\cdots z_{i_{\ell}}\, \zbar_{i'_1} \cdots \zbar_{i'_{\ell'}}
\ee
The tensor $A$ is symmetric in the first $\ell$ variables and symmetric in the last $\ell'$ variables, and $U(d)$-invariant in the sense of \eqref{MAC}. Lemma~\ref{lemma:invariantC} now yields $A=\beta_{\ell,d}\tilde{A}$ and thus \eqref{PavgC} except for the value of the constant $\beta_{\ell,d}$. 

To compute $\beta_{\ell,d}$, note that $\tilde{A}_{1\ldots 11\ldots 1}=1$, so
\begin{align}
\beta_{\ell,d}&=A_{1\ldots 11\ldots 1}\\ 
&= \int\limits_{\SSS(\CCC^d)} \!\! u(d\vz) \, |z_1|^{2\ell}\\
&= \int\limits_{\SSS(\RRR^{2d})} \!\! u(d\vx)\, (x_1^2+x_2^2)^\ell\label{x12plusx22}\\
&= \sum_{k=0}^\ell \binom{\ell}{k} \int\limits_{\SSS(\RRR^{2d})} \!\! u(d\vx)\,  x_1^{2k} x_2^{2(\ell-k)} \\
\intertext{[using \eqref{Pavg} for $P(x_1,\ldots,x_{2d})=x_1^{2k}x_2^{2(\ell-k)}$, which has $C_{i_1\ldots i_{2d}} = \binom{2\ell}{2k}^{-1}$ if $2k$ of the $i_j$ are 1 and the others are $2$, and $C_{i_1\ldots i_{2d}}=0$ otherwise, so the
sum in \eqref{Pavg} has $\binom{\ell}{k}$ nonzero terms]}
&= \alpha_{2\ell,2d} \sum_{k=0}^\ell \binom{\ell}{k}^{\! 2} \binom{2\ell}{2k}^{\!-1} \\
&= \alpha_{2\ell,2d} 4^\ell \binom{2\ell}{\ell}^{\!-1}\,,\label{lastA1a}
\end{align}
where the last step can be obtained either from Gauss's theorem about the hypergeometric function ${}_2F_1(a,b;c;z)$ at $z=1$ \cite{hypergeometricfunction}, or using Zeilberger's algorithm \cite{Z91}.
One easily verifies that \eqref{lastA1a} is equal to \eqref{beta}.

Alternatively, starting from \eqref{x12plusx22}, we can evaluate this integral by noting that for $\vx=(x_1,\ldots,x_D)=(\vx^{(1)},\vx^{(2)})\in\RRR^D$ with $\vx^{(1)}=(x_1,\ldots,x_n)$ and $\vx^{(2)}=(x_{n+1},\ldots,x_{n+m})$ such that $n+m=D$, we have that for the $D$-dimensional volume measure,
\begin{align}
d\vx &= d\vx^{(1)}\, d\vx^{(2)}\\
&=r_1^{n-1}\, d\Omega^{(1)} \, dr_1 \, r_2^{m-1} \, d\Omega^{(2)}\, dr_2\\
&=\rho\, r_1^{n-1}\, r_2^{m-1}\, d\Omega^{(1)} \, d\Omega^{(2)}\, d\theta \, d\rho\,,
\end{align}
where \z{$r_i=|\vx^{(i)}|$, $d\Omega^{(i)}$ is the solid angle for $\vx^{(i)}/r_i$,} $\rho=|\vx|=\sqrt{r_1^2+r_2^2}$, $\cos \theta= r_1/\rho$ ($0\leq \theta\leq \pi/2$), so $r_2=\rho \, \sin\theta$. This yields, for $n=2$ and $D=2d$,
\begin{align}
\beta_{\ell,d}&= \int\limits_{\SSS(\RRR^{2d})} \!\! u(d\vx) \, (x_1^2+x_2^2)^\ell\\
&=\frac{1}{|\SSS(\RRR^{2d})|} \int_0^{\pi/2} \!\! d\theta\, \cos^{2\ell}\theta \, \cos\theta \, \sin^{2d-3}\theta \, 2\pi|\SSS(\RRR^{2d-2})|\\ 
&= 2\pi\frac{(2d-2)!!\,g(2d-2)}{(2d-4)!!\,g(2d)} \int_0^{\pi/2} d\theta\, \sin^{2d-3}\theta\, \cos^{2\ell+1} \theta \\
&= (2d-2) \frac{(2\ell)!! (2d-4)!!}{(2d+2\ell-2)!!}\\
&= \frac{(2\ell)!! (2d-2)!!}{(2d+2\ell-2)!!}\label{lastA1c}
\end{align}
using
\be
\int_0^{\pi/2} d\theta \, \sin^p \theta \, \cos^q \theta = \frac{(q-1)!! (p-1)!!}{(p+q)!!}
\ee
for odd $q$. One easily verifies that \eqref{lastA1c} is equal to \eqref{beta}.
\end{proof}

\bigskip

\begin{proof}[Proof of Theorem~\ref{thm3} in the complex case $\X=\CCC^d$]
By the same reasoning as in the real case, involving \eqref{eqfirst}--\eqref{eqlast}, $T$ must be self-adjoint and bounded. Clearly, it is $U(d)$-invariant.
For $\ell,\ell'\in\{0,1,2,\ldots\}$, let $\mathscr{A}_{\ell\ell'}$ be the set of all harmonic bi-homogeneous polynomials $P(z_1,\ldots,z_d,\zbar_1,\ldots,\zbar_d)$ of bi-degree $(\ell,\ell')$; for $\ell,\ell'\geq 1$, they are of the form 
\be
P(\vz,\vzbar) = \sum_{i_1\ldots i_\ell,i'_1\ldots i'_{\ell'}=1}^d C_{i_1\ldots i_\ell i'_1 \ldots i'_{\ell'}} \, z_{i_1}\cdots z_{i_\ell}\,\zbar_{i_1}\cdots \zbar_{i'_{\ell'}}
\ee
with a tensor $C$ that is symmetric in the sense of \eqref{sigmaAC} and traceless in the sense that
\be
\sum_{i=1}^d C_{i_1\ldots i_{\ell-1}i, i'_1\ldots i'_{\ell'-1}i} = 0
\ee 
for all $i_1,\ldots,i_{\ell-1},i'_1,\ldots,i'_{\ell'-1}\in\{1,\ldots,d\}$. Let $\Hilbert_{\ell\ell'}$ be the set of the restrictions of the $\mathscr{A}_{\ell\ell'}$ functions to $\SSS(\CCC^d)$. The functions in $\Hilbert_{\ell\ell'}$ form the complex analog of the spherical harmonics. It is known (e.g., \cite[p.~296]{VK93}) that the $\Hilbert_{\ell\ell'}$ are irreducible representation spaces of $U(d)$, that they are pairwise inequivalent representations \cite[p.~296]{VK93}, that they are mutually orthogonal \cite[p.~293]{VK93}, and that together they span $\Hilbert$ in the $L^2$ norm \cite[p.~294]{VK93},
\be
\Hilbert=\bigoplus_{\ell,\ell'=0}^\infty \Hilbert_{\ell\ell'}\,.
\ee
By the same reasoning as in the real case, each eigenspace of $T$ must be either one $\Hilbert_{\ell\ell'}$ or the sum of several ones.

To compute the eigenvalue $\tau_{\ell\ell'}$ of $T$ on $\Hilbert_{\ell\ell'}$, we consider any $P\in\Hilbert_{\ell\ell'}$ and compare the average of $P$ over the equator $\SSS(\CCC^{d-1})=\{\vz\in\SSS(\CCC^d):z_d=0\}$, or $TP(0,0,\ldots,1)$, with the value of $P$ at the north pole, $P(0,\ldots,0,1)=C_{d\ldots dd\ldots d}$.

By Lemma~\ref{lemma:avgC}, 
the average of $P$ over the equator is 0 for $\ell\neq \ell'$, while for $\ell=\ell'\geq 1$ it is
\begin{align}
\int\limits_{\SSS(\CCC^{d-1})} u_{d-1}(d\vz)\,P(\vz,\vzbar) 
&=\beta_{\ell,d-1}\sum_{i_1\ldots i_{\ell}=1}^{d-1} C_{i_1\ldots i_\ell,i_1\ldots i_{\ell}}\\
&=-\beta_{\ell,d-1}\sum_{i_1\ldots i_{\ell-1}=1}^{d-1}C_{i_1\ldots i_{\ell-1}d,i_1\ldots i_{\ell-1}d}\\
&=(-1)^{\ell}\beta_{\ell,d-1}\,C_{dd\ldots d}\,.
\end{align}
Thus, the eigenvalue of the operator $T$ on $\Hilbert_{\ell\ell'}$ is 
\be
\tau_{\ell\ell'}=
\begin{cases}
  0 & \text{if $\ell\neq \ell'$}\\
  (-1)^{\ell}\beta_{\ell,d-1} & \text{if $\ell=\ell'\geq 1$.} 
\end{cases}
\ee
For $\ell=\ell'=0$, of course, $\Hilbert_{\ell\ell'}=\Hilbert_{00}$ consists of the constant functions, and the eigenvalue is $\tau_{00}=1$.
We can now identify the largest absolute eigenvalues. Since, by \eqref{beta},
\be
\beta_{\ell+1,d} = \frac{\ell+1}{\ell+d} \beta_{\ell,d} < \beta_{\ell,d}\,,
\ee
we have that
\be
\max_{\ell=1,2,3,\ldots} \beta_{\ell,d} = \beta_{1,d} = \frac{1}{d}\,.
\ee
Thus, 1 does not occur as an eigenvalue except for constant functions, and the largest absolute non-1 eigenvalue is $\beta_{1,d-1}=1/(d-1)$.
\end{proof}

\section{Application}
\label{sec:appl}

A physical application of our results, described in detail in \cite{GLMTZ14}, concerns quantum statistical mechanics, in particular the distribution of the wave function in thermal equilibrium. 

\subsection{Setup}

Consider any quantum system $S$ weakly coupled to another system $B$ with a large (but finite) number of particles; $B$ is called the ``heat bath.'' Suppose that the composite system $S\cup B$ is isolated, with Hilbert space
\be
\Hilbert=\Hilbert_S \otimes \Hilbert_B
\ee
and the Hamiltonian
\be
H = H_S \otimes I_B + I_S\otimes H_B + H_{SB}\,,
\ee
where $I$ denotes the identity operator, and the interaction term $H_{SB}$ is assumed to be small and will be neglected for much of the reasoning. In correspondence to the physical assumption that $S\cup B$ is constrained to a finite volume of 3-space, we assume that $H$ has pure point spectrum. Consider an energy interval $[E,E+\delta E]$ that is small on the macroscopic scale but large enough to contain many eigenvalues of $H$. Let the ``micro-canonical'' subspace $\Hilbert_{mc}$ of $\Hilbert$ be the spectral subspace corresponding to $[E,E+\delta E]$, i.e., $\Hilbert_{mc}$ is spanned by the eigenvectors of $H$ with eigenvalues between $E$ and $E+\delta E$, and suppose that $S\cup B$ is in a pure state $\psi$ in $\Hilbert_{mc}$. Without loss of generality, $\Hilbert_{mc}$, $\Hilbert_S$, and $\Hilbert_B$ can be taken to have finite dimension, while $\dim\Hilbert_{mc}$ and $\dim\Hilbert_B$ should be large (like $\exp(10^{10})$). Most wave functions $\psi\in\SSS(\Hilbert_{mc})$ (``most'' relative to $u_{\SSS(\Hilbert_{mc})}$) represent states of thermal equilibrium. According to a fact known as ``canonical typicality'' \cite{GMM04,Gold2,PSW06}, most $\psi\in\SSS(\Hilbert_{mc})$ are such that, for $\dim \Hilbert_S\ll \dim\Hilbert_{mc}$, 
\be
\rho_S^\psi \approx \rho_\beta\,,
\ee
where $\rho_S^\psi$ denotes the reduced density matrix of $S$,
\be
\rho_S^\psi := \tr_B |\psi\rangle\langle\psi|\,,
\ee
and $\rho_\beta$ the ``canonical'' density matrix associated with inverse temperature $\beta=1/kT$ ($k$ = Boltzmann's constant, $T$ = temperature),
\be
\rho_\beta := \frac{1}{Z} e^{-\beta H}
\ee
with $Z=\tr e^{-\beta H}$; the value of $\beta$ is determined by $E$ and the sizes of $S$ and $B$.

\subsection{Conditional Wave Function}

As first pointed out in \cite{Gold1}, it is also true for most $\psi\in\SSS(\Hilbert_{mc})$ that the ``conditional wave function'' $\psi_S$ of system $S$ (see below) has a probability distribution that depends only on $H_S$ and $\beta$ (and thus does not depend on $H_B$, $H_{SB}$ if small enough, or on the details of $\psi$), called the ``thermal equilibrium distribution of $\psi_S$.'' This distribution is $GAP(\rho_\beta)$, the Gaussian Adjusted Projected measure with covariance operator $\rho_\beta$ \cite{Gold1}. The mathematical proof \cite{GLMTZ14} of this statement  of ``GAP typicality'' is where Theorems~\ref{thm1} and \ref{thm2} are useful.

To explain this further, we first elucidate the concept of ``conditional wave function.'' Given an orthonormal basis $\{b_1,\ldots,b_d\}$ of $\Hilbert_B$ and a vector $\psi\in\SSS(\Hilbert)$, the conditional wave function $\psi_S$ is a random vector in $\SSS(\Hilbert_S)$, obtained from $\psi$ by means of the partial inner product,
\be
\psi_S = \frac{1}{\mathcal{N}} \scp{b_J}{\psi}_B\,,
\ee
with a random basis vector $b_J$, chosen with the Born-rule distribution
\be
\PPP(J=j) = \Bigl\| \scp{b_j}{\psi}_B \Bigr\|_{\Hilbert_S}\,.
\ee
($\mathcal{N}$ is a normalizing factor, and the partial inner product $\phi=\scp{b}{\psi}_B$ is defined by the property $\scp{\chi}{\phi}_{\Hilbert_S}=\scp{\chi\otimes b}{\psi}_{\Hilbert}$.) Usually, $\psi_S$ depends on $\psi$ as well as on the basis $\{b_1,\ldots, b_d\}$; however, in the special situation of thermal equilibrium, the distribution does not depend on the choice of basis, nor (as already mentioned) on $\psi$ (except through $H_S$ and $\beta$).

A key to proving GAP typicality is this statement: \textit{If $\{b_1,\ldots,b_d\}$ is a random orthonormal basis of $\Hilbert_B$ then, for every $\psi\in\SSS(\Hilbert)$, the distribution of $\psi_S$ is close to $GAP(\rho_S^\psi)$ with probability near 1.} To prove this statement, two things are relevant: First, that when, for fixed $\psi$, the distribution of $\psi_S$ on $\SSS(\Hilbert_S)$ is \emph{averaged} over all orthonormal bases $\{b_1,\ldots,b_d\}$, the result is $GAP(\rho_S^\psi)$. And second, the result of the present paper. That is because the distribution of $\psi_S$ is actually of the form
\be\label{quantity}
\frac{1}{d}\sum_{i=1}^d \varphi(b_i)\,,
\ee
where $\varphi$ is a function on $\SSS(\Hilbert_B)$ that yields measures on $\SSS(\Hilbert_S)$ as values. Theorem~\ref{thm1} shows that the measure \eqref{quantity} will, with high probability, be close to its average 
\be
\int_{\SSS(\Hilbert_B)} u(d\vx) \, \varphi(\vx)=GAP(\rho_S^\psi)\,,
\ee
as claimed.

\bigskip

\noindent\textit{Acknowledgments.} 
We thank J\'ozsef Beck, Beno\^it Collins, Roe Goodman, Neil Sloane, and Doron Zeilberger for helpful discussions. 
We acknowledge support from the National Science Foundation [grant DMS-0504504 to S.G.; DMR 08-02120 to J.L.L.], the Air Force Office of Scientific Research [grant AF-FA 49620-01-0154 to J.L.L.], the John Templeton Foundation [grant 37433 to S.G.\ and R.T.], the European Cooperation in Science and Technology [COST action MP1006 to N.Z.], and Istituto Nazionale di Fisica Nucleare [to N.Z.].

\end{document}